\let\mathcal\mathscr
\renewcommand{\leq}{\leqslant}
\renewcommand{\geq}{\geqslant}
\newtheorem{theorem}{Theorem}[section]
\newtheorem{corollary}[theorem]{Corollary}
\newtheorem{lemma}[theorem]{Lemma}
\newtheorem{prop}[theorem]{Proposition}
\newtheorem{conj}[theorem]{Conjecture}
\theoremstyle{definition}
\newtheorem{remark}[theorem]{Remark}
\newtheorem*{ack}{Acknowledgements}
\theoremstyle{definition}
\renewcommand{\geq}{\geqslant}
\renewcommand{\leq}{\leqslant}
\renewcommand{\mod}{\mathrm{mod}\,}
\newcommand{\Pic}{\mathrm{Pic}}
\newcommand{\eps}{\varepsilon}
\newcommand{\PP}{\mathbb{P}}
\newcommand{\sqf}{\mathrm{sf}}
\newcommand{\RR}{\mathbb{R}}
\newcommand{\AAA}{\mathbb{A}}
\newcommand{\QQ}{\mathbb{Q}}
\newcommand{\ZZ}{\mathbb{Z}}
\newcommand{\NN}{\mathbb{N}}
\numberwithin{equation}{section}
\def\Ddots{\mathinner{\mkern1mu\raise\p@
\vbox{\kern7\p@\hbox{.}}\mkern2mu
\raise4\p@\hbox{.}\mkern2mu\raise7\p@\hbox{.}\mkern1mu}}
\DeclareRobustCommand\widecheck[1]{{\mathpalette\@widecheck{#1}}}
\def\@widecheck#1#2{%
    \setbox\z@\hbox{\m@th$#1#2$}%
    \setbox\tw@\hbox{\m@th$#1%
       \widehat{%
          \vrule\@width\z@\@height\ht\z@
          \vrule\@height\z@\@width\wd\z@}$}%
    \dp\tw@-\ht\z@
    \@tempdima\ht\z@ \advance\@tempdima2\ht\tw@ \divide\@tempdima\thr@@
    \setbox\tw@\hbox{%
       \raise\@tempdima\hbox{\scalebox{1}[-1]{\lower\@tempdima\box
\tw@}}}%
    {\ooalign{\box\tw@ \cr \box\z@}}}
\subjclass[2010]{11D25 (11N56)}
\begin{document}

\title{Class numbers and integer points on some Pellian surfaces}

\author{Yijie Diao}
\address{IST Austria\\
Am Campus 1\\
3400 Klosterneuburg\\
Austria}
\email{yijie.diao@ist.ac.at}

%\keywords{exceptional eigenvalues, Kloosterman sums, Kuznetsov formula, Ramanujan conjecture}

%

%\subjclass[2010]{Primary 11F72, 11L05}

%\setcounter{tocdepth}{2}  \maketitle 

\begin{abstract} 
We provide an estimate for the number of nontrivial integer points on the Pellian surface $t^2 - du^2 = 1$ in a bounded region. We give a lower bound on the size of fundamental solutions for almost all $d$ in a certain class, based on a recent conjecture of Browning and Wilsch about integer points on log K3 surfaces. We also obtain an upper bound on the average of class number in this class, assuming the same conjecture.
\end{abstract}

\maketitle

\thispagestyle{empty}
\setcounter{tocdepth}{1}
\tableofcontents

\section{Introduction}

\subsection{The Pell equation}

A \textit{Pell equation} is a Diophantine equation of the form
\begin{equation}\label{Pell}
    t^2 - du^2 = 1,
\end{equation}
where $d \in \NN$ is a positive integer that is not a square.
When $d$ is a negative integer or a perfect square, the only possible integer solutions are the trivial ones $(t, u) = (\pm 1, 0)$. 
In this article, we assume that $d \geq 2$ is not a perfect square. For given $d$, it is convenient to write a solution as $\eta_d = t + u\sqrt{d}$. Dirichlet's unit theorem tells us that the set of integer solutions of (\ref{Pell}) is isomorphic to the group of integers and there exists a unique \textit{fundamental solution} $\eps_d = t_1 + u_1 \sqrt{d}$ satisfying
$\{\eta_d: \text{ solution of (\ref{Pell})} \} = \{ \pm \eps_d^n, \ n \in \ZZ \}$ and $t_1, u_1 \geq 1$. 

Note that $t_1^2 = 1 + du_1^2 \geq 1 + d$, so we obtain the lower bound
\begin{eqnarray}\label{lower-bound}
	\eps_d \geq \sqrt{d+1} + \sqrt{d}.
\end{eqnarray}
The equality holds if and only if $d + 1$ is a perfect square.
%It is also convenient to take $\eps_d = +\infty$ if $d$ is a perfect square.

Let $h(d)$ be the class number of quadratic forms of determinant\footnote{ We use the term \textit{determinant} $d$ instead of \textit{discriminant} $\Delta$, in order to align with Hooley \cite{Hoo} and differ from the regular definition $\Delta(ax^2 + bxy + cy^2) = b^2 - 4ac$. See also \cite[Chapter 1.3.D]{Cox} for the history and connection between these two terms.}  $d$. Specifically, $h(d)$ is the number of properly primitive classes of indefinite forms $ax^2 + 2bxy + cy^2$ with determinant $d = b^2 - ac$. This unusual definition was originally due to Gauss. The advantage is that any integer can be a determinant, instead of having to meet certain criteria modulo $4$.

In order to find an upper bound of $\eps_d$, the class number formula \cite[Formula (3)]{Hoo} tells us that
\begin{eqnarray}\label{cnf}
	h(d)\log \eps_d = \sqrt{d} L_d(1), \text{ where } L_d(1) = \sum_{\substack{m = 1 \\ m \text{ odd}}}^{\infty} \Big( \frac{d}{m} \Big) \frac{1}{m}.
\end{eqnarray}
The size of $L_d(1)$ fluctuates within relatively narrow ranges. From \cite[Equation (3.11)]{Bru}, we have $L_d(1) \ll \log d$. Therefore, on using $h(d) \geq 1$, we know that
\begin{equation}\label{fs-ub}
	\log \eps_d \ll  \sqrt{d} \log d.
\end{equation}

\subsection{Density of integer points on the Pellian surface}

In this paper we shall study the following counting function of nontrivial integer solutions to the Pell equation with bounded height:
\begin{eqnarray}\label{count-func}
	N(B) = \#\{ (t,d,u) \in \ZZ^3: t^2 - du^2 = 1, \max (|t|, |d|, |u|) \leq B, u \neq 0 \}.
\end{eqnarray}
The growth rate of $N(B)$ is closed related to the average size of fundamental solutions.

For $\alpha > 0$ and $x \geq 2$, Hooley \cite{Hoo} introduced the counting function
\begin{equation*}
    S(x, \alpha) = \# \{ \eta_d: 2 \leq d \leq x,  \eps_d \leq \eta_d \leq d^{\frac{1}{2} + \alpha} \}.
\end{equation*}
%and
%\begin{equation}
%    S_f(x, \alpha) = \# \{ d: 2 \leq d \leq x, \eps_d \leq d^{\frac{1}{2} + \alpha} \}.
%\end{equation}
For $0 < \alpha \leq \frac{1}{2}$, he \cite[Theorem 1]{Hoo} has proved that
\begin{equation}\label{thm-Hoo}
    S(x, \alpha) \sim \frac{4\alpha^2}{\pi^2} x^{\frac{1}{2}} \log^2 x \text{, when } x \rightarrow \infty.
\end{equation}
For $\alpha > \frac{1}{2}$, Hooley has suggested a conjecture \cite[Conjecture 1]{Hoo} for the asymptotic behavior of $S(x, \alpha)$ when $x \rightarrow \infty$. In particular, his conjecture implies that
\begin{equation}\label{HooConj}
    S(x, \alpha) \ll_\alpha x^{\frac{1}{2}} \log^2 x.
\end{equation}

We are now ready to reveal our first result.
\begin{theorem}\label{thm-cf}
Let $\epsilon > 0$.
    \begin{enumerate}[label=(\roman*)]
        \item We have $$B^\frac{1}{2}(\log B)^2 \ll N(B) \ll_\epsilon B^{\frac{7}{12} + \epsilon}.$$
        \item If Hooley's Conjecture (\ref{HooConj}) is true for $\alpha = \frac{3}{2}$, then we also have $$N(B) \ll B^\frac{1}{2} (\log B)^2.$$
    \end{enumerate}
\end{theorem}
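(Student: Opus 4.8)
The plan is to read everything off the dictionary between solutions of~(\ref{Pell}) and fundamental solutions/class numbers, combined with Hooley's results. For the lower bound I would take $\alpha=\tfrac12$ in Hooley's Theorem~(\ref{thm-Hoo}). Since $\eps_d\ge 2\sqrt d$ we have $\eps_d^2>d$, so among the powers $\eps_d^n$ only $n=1$ can lie in the window $[\eps_d,d^{1/2+1/2}]=[\eps_d,d]$; hence $S(x,\tfrac12)$ is exactly the number of non-square $d\le x$ with $\eps_d\le d$. For each such $d$, writing $\eps_d=t_1+u_1\sqrt d$ gives $t_1\le\eps_d\le d\le x$ and $u_1=(\eps_d-\eps_d^{-1})/(2\sqrt d)\le\sqrt d\le\sqrt x$, so the four triples $(\pm t_1,d,\pm u_1)$ are counted by $N(x)$; distinct $d$ give distinct triples, so $N(B)\ge 4\,S(B,\tfrac12)\gg B^{1/2}(\log B)^2$ by~(\ref{thm-Hoo}).

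For the two upper bounds I would first record a common reduction. For fixed non-square $d\le B$, the solutions of~(\ref{Pell}) with $u\ne 0$ and $|t|\le B$ are the $4\,\#\{n\ge 1:t_n\le B\}$ triples $(\pm t_n,d,\pm u_n)$ with $t_n+u_n\sqrt d=\eps_d^{\,n}$, the bound $|u_n|\le B$ being automatic from $u_n^2 d=t_n^2-1<t_n^2$. As $t_n=\tfrac12(\eps_d^{\,n}+\eps_d^{-n})$, the condition $t_n\le B$ reads $\eps_d^{\,n}\le B+\sqrt{B^2-1}$, so
\[
N(B)=4\sum_{n\ge 1}\#\{d\le B:\eps_d\le (B+\sqrt{B^2-1})^{1/n}\}.
\]
The terms with $n\ge 2$ I would bound using the trivial estimate $\#\{d:\eps_d\le Y\}\le\#\{(t,u):2\le t\le Y,\ u^2\mid t^2-1\}\ll Y\log Y$, and summing $\sum_{n\ge 2}(2B)^{1/n}\log B\ll B^{1/2}\log B$; this yields
\[
N(B)\ \ll\ \#\{d\le B:\eps_d\le 2B\}\ +\ O\!\big(B^{1/2}\log B\big).
\]
Part~(ii) then drops out: split $\{d\le B:\eps_d\le 2B\}$ at $d=\sqrt{2B}$; there are $\le\sqrt{2B}$ values with $d\le\sqrt{2B}$, while for $d\ge\sqrt{2B}$ we have $\eps_d\le 2B\le d^2=d^{1/2+3/2}$, so $\eps_d$ is counted by $S(B,\tfrac32)$, and this part has size $\le S(B,\tfrac32)\ll B^{1/2}(\log B)^2$ assuming Hooley's Conjecture~(\ref{HooConj}) for $\alpha=\tfrac32$. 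Hence $N(B)\ll B^{1/2}(\log B)^2$.

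It remains to prove the unconditional bound $\#\{d\le B:\eps_d\le 2B\}\ll_\epsilon B^{7/12+\epsilon}$, which is the heart of the matter. From $t_1^2-du_1^2=1$ with $t_1\le 2B$ and $u_1^2=(t_1^2-1)/d\ge (t_1^2-1)/B$, it suffices to bound
\[
\mathcal N=\#\{(t,u)\in\ZZ_{\ge 1}^2:\ 2\le t\le 2B,\ u^2\mid t^2-1,\ u^2\ge (t^2-1)/B\}.
\]
The contribution of $u\le\sqrt{2B}$ is harmless: there $u^2\ge (t^2-1)/B$ confines $t$ to an interval of length $\ll u\sqrt B$, meeting $\ll 2^{\omega(u)}(\sqrt B/u+1)$ of the residue classes $c\bmod u^2$ with $c^2\equiv 1$, and $\sum_{u\le\sqrt{2B}}2^{\omega(u)}(\sqrt B/u+1)\ll B^{1/2}(\log B)^2$. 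For $u>\sqrt{2B}$ I would use $\gcd(t-1,t+1)\mid 2$: up to bounded powers of $2$ one may write $u^2=v_1^2v_2^2$ with $v_1^2\mid t-1$, $v_2^2\mid t+1$, $\gcd(v_1,v_2)=1$, so that, with $t-1=v_1^2a$ and $t+1=v_2^2b$, each such pair is accounted for by an integral solution of
\[
v_2^2b-v_1^2a=2,\qquad v_1^2a\ll B,\ \ v_2^2b\ll B,\ \ ab\ll B,\ \ v_1v_2\gg\sqrt B.
\]
Counting these quadruples is the real obstacle. Fixing either of the pairs $(v_1,v_2)$ or $(a,b)$ pins the remaining two variables to a single residue class modulo a large square — equivalently, to a single conic — so a naive count gives only $O(B)$; the improvement to $B^{7/12+\epsilon}$ should come from a divisor/hyperbola argument that splits according to the sizes of $\max(v_1,v_2)$ and of $t$ and exploits the sparsity of squares, the exponent $\tfrac{7}{12}$ being the outcome of balancing these ranges. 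I expect this count to be the main technical difficulty of the proof.
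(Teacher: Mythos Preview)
Your lower bound and the conditional upper bound in part~(ii) are correct and essentially match the paper's argument; likewise, your reduction of the unconditional upper bound to the quadruple count with $v_2^2b-v_1^2a=2$ is exactly the paper's ``initial decomposition'' of the Pell equation (in the paper's notation your $(v_1,v_2,a,b)$ is $(u_1,u_2,d_1,d_2)$), so up to this point you are on the right track.

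The genuine gap is the last step. You propose that a ``divisor/hyperbola argument that exploits the sparsity of squares'' should deliver the exponent $\tfrac{7}{12}$, but no such elementary argument is known to work here: fixing either $(v_1,v_2)$ or $(a,b)$ and counting the other pair only yields bounds of strength $B^{1+o(1)}$ after summing over the free pair, and balancing these gives nothing better than the trivial range. The paper instead imports two external inputs for the dyadic count $N_\eta(D_1,D_2,U_1,U_2)$: Reuss's approximate determinant method bound~(\ref{thm-Reu}), which for a conic $d_1u_1^2-d_2u_2^2=\eta$ in a box produces a saving governed by
\[
\log M=\tfrac{9}{8}\,\frac{\log(D_1D_2)\log(U_1U_2)}{\log(D_1U_1^2)},
\]
and a complementary estimate~(\ref{thm-FJ}) of Fouvry--Jouve for very lopsided $D_1\ll D_2$. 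These are combined in Lemma~\ref{Reu+FJ} via a case analysis in the parameter $k=\log(D_1D_2)/\log B$, and the exponent $\tfrac{7}{12}$ emerges at the worst case $k=\tfrac{2}{3}$. The determinant method input is essential and is not a divisor/hyperbola phenomenon; without it your outline does not reach $B^{7/12+\epsilon}$.
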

We will combine results from Fouvry and Jouve \cite{FoJo} and Reuss \cite{Reu} for the upper bound in Part(i). The lower bound will be deduced from (\ref{thm-Hoo}).

\subsection{The log K3 surface and $\AAA^1$-curve solutions}
 A smooth cubic surface $U \subset \AAA_\QQ^3$ is said to be \textit{log K3}, if there is a morphism from a smooth, projective surface $\tilde{X}$ to the completion $X$ of $U$ in $\PP_\QQ^3$, which is an isomorphism over $U$, and such that $\tilde{D} = \tilde{X} \setminus U$ is a divisor with strict normal crossings whose class in $\mathrm{Pic}_\QQ (\tilde{X})$ is $\omega_{\tilde{X}}^\lor$. In particular, it follows from the adjunction formula that $U$ is $\log$ K3, if $X$ itself is smooth over $\QQ$.

%Let $f(x_1, x_2, x_3)$ be a cubic polynomial with integer coefficients. Let $X$ be the surface defined by $f$ over $\QQ$. We say $X$ is \textit{log K3} if the completion $\bar{X} \subset \PP_\QQ^3$ of $X$ is smooth, and the boundary $D = \bar{X} \setminus X$ is a divisor with strict normal crossings. 

Let $U$ be a log K3 surface. We denote
\begin{eqnarray*}
	N_U(B) = \# \{ (x_1, x_2, x_3) \in U(\ZZ): \max(|x_i|) \leq B \}.
\end{eqnarray*}
When $U(\ZZ)$ contains an $\AAA^1$-curve $\phi$ that is defined over $\ZZ$, the curve contributes $\asymp B^{\frac{1}{\deg(\phi)}}$ points to $N_U(B)$. We typically expect the contribution from $\AAA^1$-curves to dominate the counting function. Hence it is natural to study the subset $U(\ZZ)^\circ$ obtained by removing the points in $U(\ZZ)$ that lie on any $\AAA^1$-curve defined over $\ZZ$. This leads us to consider the following counting function
\begin{eqnarray*}
	N_U^\circ(B) = \# \{ (x_1, x_2, x_3) \in U(\ZZ)^\circ: \max(|x_i|) \leq B \}.
\end{eqnarray*}

In a recent paper, Browning and Wilsch \cite[Conjecture 1.1]{BroWil} have proposed the following conjecture for the specified logarithmic growth  for a certain class of log K3 surfaces.
\begin{conj}[Browning--Wilsch]\label{BW-h}
	Let $U \subset \AAA^3$ be a cubic surface that is smooth and log K3 over $\QQ$ and that is defined by a cubic polynomial $f \in \ZZ[x_1, x_2, x_3]$. Denote by $\rho_U$ the Picard number of $U$ over $\QQ$ and by $b$ the maximal number of components of $\tilde{D}(\RR)$ that share a real point. Then
	$$N_U^\circ(B) \ll_U (\log B)^{\rho_U + b},$$
	when $B \rightarrow \infty$.
\end{conj}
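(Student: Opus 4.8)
This is a conjecture rather than a theorem, so what follows is a strategy together with an honest account of where it stalls. The plan is to reduce the count of $U(\ZZ)^\circ$ of height at most $B$ to a lattice-point problem on a universal torsor, and then to read off the two exponents separately: $\rho_U$ from the rank of the arithmetic group that survives once the boundary behaviour is fixed, and $b$ from the local real geometry of $\tilde D$ near a point where several components cross.

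First I would pin down the geometry. The projective closure $X \subset \PP^3_\QQ$ of $U$ is a cubic surface; let $\tilde X \to X$ be the minimal desingularisation, so $\tilde X$ is a weak del Pezzo surface of degree $3$ and $\tilde D = \tilde X \setminus U$ is, by hypothesis, an anticanonical strict-normal-crossings divisor. Under that hypothesis the boundary configuration falls into a short list — a triangle of lines, a line plus a conic, an irreducible nodal cubic, and so on — and in each case one computes $\rho_U = \mathrm{rk}\,\Pic(\tilde X) - r$, where $r$ is the rank of the subgroup of $\Pic(\tilde X)$ generated by the components of $\tilde D$, while $b$ is the purely topological invariant of $\tilde D(\RR)$ named in the statement. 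The $\AAA^1$-curves on $U$ are the strict transforms of lines of $X$ not lying in the boundary plane (degree $1$), conics meeting $\tilde D$ in two points (degree $2$), and further low-degree rational curves meeting $\tilde D$ in a single point; passing to $U(\ZZ)^\circ$ deletes exactly the $\ZZ$-points lying on the $\ZZ$-defined members of this family.

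Next I would lift the surviving points to a universal torsor $\mathcal T \to \tilde X$, a $\mathbb{G}_m^\rho$-torsor with $\rho = \mathrm{rk}\,\Pic(\tilde X)$, on which the anticanonical height becomes a monomial in the torsor coordinates and the integrality and coprimality conditions become finitely many congruences. Counting torsor points of height at most $B$ with the $\AAA^1$-contributions excised should then reduce to counting points of a rank-$\rho_U$ lattice — the free quotient left once the boundary behaviour of a point is fixed — inside a region that is genuinely thin, of length $O(\log B)$, in every direction attached to a component of $\tilde D$; summing a bounded function over such a box yields the factor $(\log B)^{\rho_U}$. The remaining $(\log B)^{b}$ is archimedean: near a real point of $\tilde D$ where $b$ components cross, the integral real points of $U$ concentrate into $b$ nearly degenerate cusps, and an elementary volume estimate in a neighbourhood of that point contributes one $\log B$ per crossing branch; multiplying the global lattice count by this local cusp count gives the asserted bound. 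As a sanity check I would first run the whole argument for Markoff-type cubics $x_1^2 + x_2^2 + x_3^2 + x_1 x_2 x_3 = n$, where $\tilde D$ is a triangle, the invariants are small, and every step is unconditional via the group generated by the three Vieta involutions, whose orbit tree has roughly dyadic height growth — precisely what forces polylogarithmic counts and against which the general numerology is calibrated.

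The hard part, and the reason this remains a conjecture, is that a generic smooth log K3 cubic carries no such group. For the Markoff family $U(\ZZ)^\circ$ is a bounded number of orbits of an explicit infinite group, and the controlled growth along the orbit tree is what makes the count polylogarithmic; for a general $U$ one has neither an organising group nor an a priori reason to expect one, and the torsor bookkeeping above, while it correctly predicts the shape $(\log B)^{\rho_U + b}$, does not by itself bound the points of $U(\ZZ)^\circ$ that might accumulate along higher-degree curves or in positive-dimensional families. Ruling out such accumulation uniformly in $U$ is a sparsity statement of Vojta type, and it is exactly this input — not the combinatorics of the torsor — that is out of reach with present methods; so I would expect to confirm the conjecture in the cases with a large automorphism group and to recover the correct exponents heuristically in general, but not to close the argument unconditionally.
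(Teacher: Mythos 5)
There is nothing in the paper to compare your attempt against: the statement you were given is Conjecture~\ref{BW-h}, which the paper does not prove. It is quoted verbatim from Browning--Wilsch and then used purely as a hypothesis --- Theorem~\ref{thm-lb}, the corollary on fundamental units of real quadratic fields, and Theorem~\ref{thm-cn} are all conditional on it. Your proposal is candid about this, and its final paragraph correctly isolates why no proof is currently available (no organising group of automorphisms in general, and no unconditional sparsity input to rule out accumulation of integral points off the $\AAA^1$-curves), so there is no hidden gap beyond the one you yourself name; but by the same token the torsor-plus-cusp heuristic in your middle paragraph should be presented as motivation for the exponent $\rho_U + b$, not as an argument that could be completed by ``more bookkeeping.''

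The only piece of work the paper actually carries out in connection with the conjecture is the computation of the exponent for its specific surface $U$ defined by $2uyz = y^2 - 3u^2 - 1$: in Section~\ref{PicU} the completion $X$ is shown to have a single $\mathbf{A}_2$ singularity, the minimal desingularisation $\tilde X$ has geometric (and rational) Picard rank $7$, the five boundary components $L_1, L_2, L_3, E_1, E_2$ are independent, giving $\rho_U = 2$, and the three boundary lines meeting pairwise give $b = 2$, hence the prediction $N_U^\circ(B) \ll (\log B)^4$ used in the proofs. The first paragraph of your proposal, where you describe computing $\rho_U$ as $\mathrm{rk}\,\Pic(\tilde X)$ minus the rank of the boundary classes and reading $b$ off the real crossings of $\tilde D$, matches this verification step in spirit; that is the part of your write-up that corresponds to something the paper actually does, and it is correct as far as it goes.
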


%\begin{heuristic}[Browning-Wilsch]
%	Let $X \in \AAA^3$ be a cubic surface that is smooth and log $K3$ over $\QQ$ and that is defined by a cubic polynomial $f \in \ZZ[x_1, x_2, x_3]$. Denote by $\rho_X$ the Picard number of $X$ over $\QQ$ and by $b$ the maximal number of components of $\tilde{D}(\RR)$ that share a real point. If $X(\ZZ)$ is Zariski dense, then
%	$$N_X^\circ(B) \ll (\log B)^{\rho_X + b},$$
	%when $B \rightarrow \infty$.
%\end{heuristic}

%We will show that the Pellian surface $U_\mathrm{P}$ is an example of log K3 surface. Firstly, one may check that it is a smooth surface. Let $X_\mathrm{P} = V(t^2v - du^2 - v^3)$ be the completion of $U$ in $\PP_\QQ^3$. The point $[t:d:u:v] = [0:1:0:0]$ is the unique singularity of type $\mathbf{D}_4$, according to \cite[Lemma 4]{BW}. 
%The divisor at infinity is $D = X_\mathrm{P} \setminus U_\mathrm{P} = V(du^2)$, a union of two lines $L_1 = V(d,v)$ and $L_2 = V(u,v)$. 
%We take $\tilde{X}_\mathrm{P}$ to be the minimal desingularization of $X_\mathrm{P}$.
%{\color{red} To be completed.}

Let $ U_\mathrm{P} = V(t^2 - du^2 - 1) \subset \AAA_\QQ^3 $ be the surface defined by the Pell equation. Surprisingly, every integer point $(t_0,d_0,u_0)$ on $U_\mathrm{P}$ lies on an $\AAA^1$-curve defined over $\ZZ$. This can be verified by the parametrization (see Zapponi \cite[Proposition 3]{Zap})
\begin{align*}
	t(z) = & \ (t_0 + 1)u_0^4z^2 + 2(t_0 + 1)u_0^2z + t_0, \\
	d(z)  = & \ (t_0 + 1)^2u_0^2z^2 + 2(t_0 + 1)^2z + d_0, \\
	u(z)  = & \ u_0^3 z + u_0.
\end{align*}
This parametrization is consistent with the lower bound for $N(B)$ in Theorem \ref{thm-cf}.

Instead of studying the original Pellian surface, we shall investigate the integer points on the Pellian equation
\begin{eqnarray}\label{Pell-3}
	t^2 - (z^2 + k) u^2 = 1,
\end{eqnarray}
for some non-zero $k \in \ZZ$.
Changing a variable $y = t + uz$ takes us to the equation 
\begin{eqnarray}\label{logK3}
	2uyz = y^2 - ku^2 - 1.
\end{eqnarray} 
We will show in Section \ref{PicU} that this equation defines a log K3 surface when $k = 3$.

If we choose $k = 1$, then the fundamental solution of (\ref{Pell-3}) can be calculated as
$$\eps_{z^2 + 1} = 2z^2 + 1 + \sqrt{z^2 + 1} \cdot 2z.$$ Therefore, the integer solution $(y, u, z)$ to the equation (\ref{logK3}) that corresponds to the fundamental solution $t_1 + u_1 \sqrt{z^2 + 1}$ to the Pell equation (\ref{Pell-3}) lies on the $\AAA^1$-curve
$$y(z) = 4z^2 + 1, \quad u(z) = 2z,$$
defined over $\ZZ$. According to the structure of integer solutions to the Pell equation, one may prove by induction that every integer point on the surface defined by (\ref{Pell-3}) lies on an $\AAA^1$-curve defined over $\ZZ$.
Indeed, one may check that the above phenomena happens whenever $k = \pm 1, \pm 2$, or $\pm 4$.

Therefore, we shall focus on the example
\begin{eqnarray}\label{Pell-2}
	t^2 - (z^2 + 3) u^2 = 1,
\end{eqnarray}
with $k = 3$, as well as the corresponding log K3 surface defined by 
\begin{eqnarray}\label{logK3-2}
	2uyz = y^2 - 3u^2 - 1.
\end{eqnarray}
We will show in Proposition \ref{A1-sol} that for any integer solution $(t,u,z)$ to the equation (\ref{Pell-2}), if $3 \nmid z$, $z^2 + 3$ is square-free and $u \neq 0$, then it does not lie on an $\mathbb{A}^1$-curve defined over $\mathbb{Z}$.

In Theorem \ref{thm-cf}, we used Hooley's heuristics on the typical size of the fundamental solutions to assess the size of the counting function $N(B)$ in (\ref{count-func}). Our next result uses a reverse process to extract information about the typical size of the fundamental solutions, assuming Conjecture \ref{BW-h}.

\subsection{An almost all lower bound for fundamental solutions}

Hooley \cite[Conjecture 2]{Hoo} has proposed the conjecture that for almost all $d$, we have $\log \eps_d \gg d^{\frac{1}{2} - \epsilon}$. However, we are still very far away from this result. In the same paper, Hooley has firstly shown that for almost all $d$, we have $\eps_d \gg d^{\frac{3}{2} - \epsilon}$. Reuss \cite[Corollary 10]{Reu} has improved Hooley's result to $\eps_d \gg d^{3 - \epsilon}$.  
	
Let $U$ be the surface defined by the equation (\ref{logK3-2}). We will prove the following theorem.
\begin{theorem}\label{thm-lb}
Let $\epsilon > 0$.
	Assume Conjecture \ref{BW-h} is true for the surface $U$. Let $d(z) = z^2 + 3$. Then for almost all $z \in \ZZ_{> 0}$ with $3 \nmid z$ and square-free $d(z)$, we have
	\begin{eqnarray*}
		\log \eps_{d(z)} \gg d(z)^{\frac{1}{8} - \epsilon}.
	\end{eqnarray*}
\end{theorem}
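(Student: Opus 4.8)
The plan is to run the contrapositive: suppose the conclusion fails on a set of $z$ of positive density, and derive a contradiction with the logarithmic bound in Conjecture \ref{BW-h} applied to $U$. Concretely, let $\mathcal{E}$ be the set of $z\in\ZZ_{>0}$ with $3\nmid z$, $d(z)=z^2+3$ square-free, and $\log\eps_{d(z)} \leq d(z)^{\frac18-\epsilon}$; assume $\mathcal{E}$ has positive upper density, so there are $\gg Z$ such $z$ with $z\leq Z$. For each such $z$, the fundamental solution $\eps_{d(z)} = t_1 + u_1\sqrt{z^2+3}$ satisfies $t_1,u_1 \geq 1$ and $u_1 \leq \eps_{d(z)} = \exp(\log\eps_{d(z)}) \leq \exp(d(z)^{\frac18-\epsilon}) \leq \exp((Z^2+3)^{\frac18-\epsilon})$, which (for $\epsilon$ fixed and $Z$ large) is a quantity far smaller than any fixed power of $Z$; in particular, all these $u_1$ lie well inside a box of size, say, $Z^{1/100}$.

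**Producing many points on $U$ in a box:**

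The next step is to push these fundamental solutions onto the log K3 surface $U$ defined by $2uyz = y^2 - 3u^2 - 1$ via $y = t_1 + u_1 z$. Since $t_1 = \sqrt{1+(z^2+3)u_1^2} \leq u_1 z + O(u_1)$ when $u_1$ is small relative to $z$, we get a point $(y,u,z)\in U(\ZZ)$ with all coordinates bounded by $B := C Z$ for an absolute constant $C$. Crucially, by Proposition \ref{A1-sol}, because $3\nmid z$, $d(z)$ is square-free and $u = u_1 \neq 0$, this point does \emph{not} lie on any $\AAA^1$-curve defined over $\ZZ$; hence it is counted by $N_U^\circ(B)$. Distinct $z$ give distinct points (the $z$-coordinate alone distinguishes them), so we obtain $N_U^\circ(B) \gg Z \gg B$. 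This flatly contradicts Conjecture \ref{BW-h}, which forces $N_U^\circ(B) \ll_U (\log B)^{\rho_U + b}$. Hence $\mathcal{E}$ has density zero, which is exactly the claimed "almost all" statement.

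**Where the real work lies:**

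The heart of the argument — and the step I expect to be the main obstacle — is verifying that the points constructed this way are genuinely new, i.e. that they are not killed by lying on an $\AAA^1$-curve; this is precisely what Proposition \ref{A1-sol} is for, and its hypotheses ($3\nmid z$, $d(z)$ square-free, $u\neq 0$) are exactly the constraints placed on $\mathcal{E}$. A secondary technical point is the quantitative comparison of heights: one must check that the exponent $\tfrac18$ is the right threshold, i.e. that $\log\eps_{d(z)} \leq d(z)^{\frac18-\epsilon}$ really does force the corresponding $U$-point into a box of radius $O(Z^{o(1)})$, so that $N_U^\circ$ of that box is forced to be $\ll (\log Z)^{O(1)}$, contradicting the $\gg Z$ lower bound. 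Since the set $\{z\leq Z: 3\nmid z,\ z^2+3 \text{ square-free}\}$ itself has positive density (a standard square-free-sieve computation, noting $z^2+3$ has no fixed square divisor since $z^2+3\equiv 3\pmod 9$ rules out $9\mid z^2+3$ when $3\nmid z$, etc.), passing to the complement within this set loses nothing: "almost all $z$ with $3\nmid z$ and $d(z)$ square-free" is a meaningful full-density statement. One should also double-check the trivial direction: when $d(z)+1$ is a perfect square the lower bound (\ref{lower-bound}) only gives $\log\eps_{d(z)}\gg \log z$, far weaker than $d(z)^{1/8-\epsilon}$, so those $z$ must be absorbed into the exceptional set — but they form a density-zero set anyway (roughly $\sqrt{Z}$ of them up to $Z$), so they cause no trouble.
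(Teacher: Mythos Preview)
Your overall architecture matches the paper's: push fundamental solutions onto $U$ via the substitution $y=t+uz$, invoke Proposition~\ref{A1-sol} to certify that the resulting points lie in $U(\ZZ)^\circ$, and then play the count of such points off against the conjectured bound $N_U^\circ(B)\ll(\log B)^{\rho_U+b}$. That part is fine and is exactly what the paper does (Lemma~\ref{height-func} and the short proof following it).

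The genuine gap is quantitative, and it breaks the argument as you have written it. You assert that
\[
u_1 \leq \exp\bigl((Z^2+3)^{\frac18-\epsilon}\bigr)
\]
is ``far smaller than any fixed power of $Z$'', and hence that the point $(y,u,z)$ lands in a box of side $B:=CZ$ (or, later, $O(Z^{o(1)})$). This is false: $\exp\bigl((Z^2+3)^{1/8-\epsilon}\bigr)\asymp\exp(Z^{1/4-2\epsilon})$ is super-polynomial in $Z$, not sub-polynomial. Consequently you cannot confine the points to a box of side $\asymp Z$, and your claimed contradiction $N_U^\circ(B)\gg Z\gg B$ does not follow. In fact, if your box-size claim were correct, the same argument would prove $\log\eps_{d(z)}\gg d(z)^{\alpha}$ for \emph{every} $\alpha>0$, which should already signal that something is off.

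The fix, and the source of the exponent $\tfrac18$, is to take the box size to be $B=\exp(Z^{1/(4+\epsilon)})$ (equivalently $Z=(\log B)^{4+\epsilon}$). Then Lemma~\ref{height-func} gives at least $\#\{z\leq Z:3\nmid z,\ \mu^2(d(z))=1,\ \eps_{d(z)}\le B\}$ points in $U(\ZZ)^\circ$ of height $\le B$, while Section~\ref{PicU} computes $\rho_U+b=4$, so the conjecture yields $N_U^\circ(B)\ll(\log B)^4=Z^{4/(4+\epsilon)}=o(Z)$. Hence for almost all admissible $z\le Z$ one has $\log\eps_{d(z)}>\log B=Z^{1/(4+\epsilon)}\ge z^{1/(4+\epsilon)}\gg d(z)^{1/8-\epsilon}$. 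The exponent $\tfrac18=\tfrac{1}{2(\rho_U+b)}$ is thus dictated by the Picard computation, not by any growth estimate on $u_1$. A minor secondary point: to keep the height $\le B$ you should use $y=t_1-u_1z$ (equivalently, the solution $(t_1,-u_1)$), since $t_1+u_1z\asymp 2u_1z$ is larger than $u_1$, whereas $|t_1-u_1z|\ll u_1/z$; this is how the paper's Lemma~\ref{height-func} is set up.
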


\begin{remark}\label{rmk-Ricci}
The classical work of Ricci \cite{Ric} shows that there is a positive proportion of $z \in \ZZ_{> 0}$ with $3 \nmid z$ such that $d(z) = z^2 + 3$ is square-free. 
\end{remark}

Golubeva \cite{Gol1} has shown that for almost all primes $p$ and $d = 5p^2$, we have $\log \eps_{d} \gg d^{\frac{1}{4}}$. However, much less result is known when $d$ is square-free.
For any positive integer $d$, there is a unique way to decompose $d = d_1 d_2^2$, where $d_1$ is square-free. We call $\sqf(d) := d_1$ the \textit{square-free part} of $d$. 
The best previous result for the lower bound of the size of fundamental solution for an infinite set of square-free $d$ is that of Yamamoto \cite[Theorem 3.2; Example (II)]{Yam}, in which $\log \eps_{d} \gg (\log d)^3$, where $d = \sqf \big((3 \cdot 2^m + 3)^2 - 8 \big)$. By combining Theorem \ref{thm-lb} and Remark \ref{rmk-Ricci}, we obtain infinitely many square-free $d$ such that $\log \eps_d \gg d^{\frac{1}{8}-\epsilon}$, under the assumption that Conjecture $\ref{BW-h}$ is true for the surface $U$.

We recall that the fundamental unit of a real quadratic field $K$ is defined as
$$\eps_K = \frac{a+b\sqrt{\Delta_K}}{2},$$
where $\Delta_K$ is the discriminant of $K$, and $(a,b)$ is the pair of smallest positive integers satisfying
$a^2 - \Delta_K \cdot  b^2 = \pm 4$. 
We shall deduce following result.

\begin{corollary}
	Assume Conjecture \ref{BW-h} is true for the surface $U$. Then there exists infinitely many real quadratic fields $K$, such that 
 \begin{eqnarray}\label{rqf}
     \log \eps_K \gg \Delta_K^{\frac{1}{8} - \epsilon},
 \end{eqnarray}
 when $\Delta_K \rightarrow \infty$.
\end{corollary}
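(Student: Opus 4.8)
The plan is to transfer the lower bound of Theorem~\ref{thm-lb} from the Pell fundamental solution $\eps_{d(z)}$ to the fundamental unit of the field $K = \QQ(\sqrt{z^2+3})$, at the cost of only a bounded factor, and then to check that the values of $z$ supplied by Theorem~\ref{thm-lb} give infinitely many distinct such fields.

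First I would record the arithmetic of these fields. If $z^2+3$ is square-free then $z$ must be even: an odd square is $\equiv 1 \pmod 8$, which would force $4 \mid z^2+3$. Hence $d(z) = z^2+3$ is odd and $\equiv 3 \pmod 4$, so $\OO_K = \ZZ[\sqrt{d(z)}]$ and $\Delta_K = 4\,d(z)$. Being odd and $\equiv 3 \pmod 4$, the integer $d(z)$ has a prime factor $p \equiv 3 \pmod 4$; since $-1$ is not a square modulo $p$, the equation $x^2 - d(z)y^2 = -1$ has no solution, so $\eps_K$ has norm $+1$. It follows that the fundamental solution of $t^2 - d(z)u^2 = 1$ is exactly $\eps_{d(z)} = \eps_K$. (The full strength of this is not needed: for \emph{any} square-free $d$ one has $\eps_d = \eps_K^{\,j}$ with $j$ bounded by an absolute constant, since $[\OO_K : \ZZ[\sqrt d]]$, the induced index on unit groups, and the index of the norm-$1$ subgroup are each bounded; thus $\log\eps_K \asymp \log\eps_d$ would already suffice.)

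Next I would combine Remark~\ref{rmk-Ricci} with Theorem~\ref{thm-lb}. By Ricci's theorem the set of $z \in \ZZ_{>0}$ with $3 \nmid z$ and $z^2+3$ square-free has positive lower density, while by Theorem~\ref{thm-lb} the subset of these $z$ for which $\log\eps_{d(z)} \gg d(z)^{\frac{1}{8}-\epsilon}$ fails has density zero. Hence the set $\mathcal{Z}$ of $z$ meeting all of these conditions still has positive lower density, and in particular is infinite. For $z \in \mathcal{Z}$, setting $K = \QQ(\sqrt{z^2+3})$ and combining the two inputs yields
\[
\log\eps_K \;=\; \log\eps_{d(z)} \;\gg\; d(z)^{\frac{1}{8}-\epsilon} \;=\; \bigl(\tfrac{1}{4}\Delta_K\bigr)^{\frac{1}{8}-\epsilon} \;\gg\; \Delta_K^{\frac{1}{8}-\epsilon}.
\]
Since a square-free integer exceeding $1$ determines its quadratic field uniquely and the values $z^2+3$ are pairwise distinct, the family $\{\,\QQ(\sqrt{z^2+3}) : z \in \mathcal{Z}\,\}$ consists of infinitely many real quadratic fields, each satisfying~(\ref{rqf}), and $\Delta_K = 4(z^2+3) \to \infty$ along this family.

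The only step that needs genuine care is the penultimate one: neither Theorem~\ref{thm-lb} (an ``almost all'' statement) nor Remark~\ref{rmk-Ricci} (a positive-proportion statement) suffices by itself, and one must check that removing the zero-density exceptional set from the positive-density set of admissible $z$ still leaves an infinite set — which it does. The remaining ingredients, namely the bounded index relating $\eps_d$ to $\eps_K$ and the relation $\Delta_K \asymp d(z)$, are standard.
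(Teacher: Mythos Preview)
Your argument is correct and in fact a bit cleaner than the paper's. Both proofs reduce to showing $\eps_{K}\ge \eps_{d(z)}$ (equivalently, that the fundamental unit has norm $+1$) for a positive-density set of admissible $z$, and then invoke Theorem~\ref{thm-lb} together with $\Delta_K=4d(z)$. The paper achieves norm $+1$ by imposing the extra congruence $z\equiv 26\ (\mathrm{mod}\ 42)$, so that $7\mid d(z)$ and the negative Pell equation is obstructed modulo $7$; it then appeals to a Ricci-type positive-proportion statement within this residue class. You instead observe that square-freeness of $z^2+3$ already forces $z$ even, hence $d(z)\equiv 3\ (\mathrm{mod}\ 4)$, which automatically supplies a prime $p\equiv 3\ (\mathrm{mod}\ 4)$ dividing $d(z)$ and kills the negative Pell equation; no auxiliary congruence is needed, and Remark~\ref{rmk-Ricci} applies directly. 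Your parenthetical remark that a bounded-index argument would already give $\log\eps_K\asymp\log\eps_d$ is also valid and makes the proof more robust. Finally, you make explicit two points the paper leaves implicit: that removing the density-zero exceptional set of Theorem~\ref{thm-lb} from the positive-density set of Remark~\ref{rmk-Ricci} still leaves an infinite set, and that distinct square-free values $z^2+3$ yield distinct fields.
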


\subsection{The average of class numbers}

Hooley \cite[Conjecture 7]{Hoo} has conjectured that
\begin{eqnarray*}
	\sum_{d \leq Z} h(d) \sim c_1 \, Z \cdot \log^2 Z,
\end{eqnarray*}
for an explicit constant $c_1 > 0$.
Unsurprisingly, we understand this conjecture as little as Hooley's other conjectures on the average size of fundamental solutions to the Pell equation.

Sarnak \cite{Sar1} has considered a similar question, but in a slightly different setting. Let $\mathcal{D} = \{ \Delta > 0: \Delta \equiv 0, 1 ( \mod 4), d \text{ not a square} \}$ be the set of positive discriminants. Let $H(\Delta)$ denote the number of inequivalent primitive binary quadratic forms $ax^2 + bxy + cy^2$ of discriminant $\Delta = b^2-4ac$, and let $\tilde{\eps}_\Delta$ be the fundamental solution of the Pellian equation $t^2 - \Delta u^2 = 4$. Let $\mathcal{D}_x = \{\Delta \in \mathcal{D}: \tilde{\eps}_\Delta \leq x\}$. He has shown that
\begin{eqnarray*}
	\frac{1}{\# \mathcal{D}_x} \sum_{\Delta \in \mathcal{D}_x} H(\Delta) = \frac{16}{35} \frac{Li(x^2)}{x} + O(x^{\frac{2}{3} + \epsilon}).
\end{eqnarray*} 
Note that this ordering is different from the usual one.

In a subsequent paper \cite{Sar2}, Sarnak studied the behavior of $H(d)$ along a thin sequence $\Delta(z) = z^2 - 4$, where the fundamental solutions are equal to $\tilde{\eps}_{\Delta(z)} = (z + \sqrt{z^2-4})/2$. He has shown that
\begin{eqnarray*}
	\sum_{z \leq Z} H(z^2 - 4) \sim c_2 \, Z^2 \cdot (\log Z)^{-1},
\end{eqnarray*} 
% \text{ where } c = \frac{5 \pi^2}{96} \prod_{p > 2} ( 1 - p^{-2} - 2 p^{-3})
where $c_2 > 0$ is an explicit constant.

Let us return to the special values of $d(z) = z^2 + 3$. The average of the class number is dominated by the contributions when $z$ is a multiple of 3, namely when the integer point $(t, z, u)$ lies on an $\AAA^1$-curve defined over $\ZZ$. Using Yamamoto's Theorem \cite[Theorem 3.1]{Yam}, which will be introduced in Section \ref{sec3}, one may deduce that
\begin{eqnarray}\label{Yam-tri}
		\sum_{\substack{z \leq Z, 3 \, \nmid \, z \\ \mu^2(z^2 + 3) = 1}} h(z^2 + 3) \ll Z^2 \cdot (\log Z)^{-2}.
\end{eqnarray}
	
Let $U$ be the surface defined by the equation (\ref{logK3-2}). Conditionally, we will prove the following improvement.
\begin{theorem}\label{thm-cn}
	Assume Conjecture \ref{BW-h} is true for the surface $U$. Then we have
	\begin{eqnarray*}
		\sum_{\substack{z \leq Z, 3 \, \nmid \, z \\ \mu^2(z^2 + 3) = 1}} h(z^2 + 3) \ll Z^{\frac{9}{5}} \cdot (\log Z)^{\frac{3}{5}}.
	\end{eqnarray*}
\end{theorem}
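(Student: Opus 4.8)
The plan is to combine the class number formula \eqref{cnf} with the conditional lower bound on fundamental solutions coming from Conjecture \ref{BW-h}. Recall from \eqref{cnf} that $h(d) = \sqrt{d}\, L_d(1)/\log\eps_d$, and from \cite[Equation (3.11)]{Bru} that $L_d(1)\ll\log d$. So for $d(z)=z^2+3$ with $z\le Z$ we have the pointwise bound
\begin{equation*}
	h(z^2+3) \ll \frac{z\log Z}{\log\eps_{z^2+3}}.
\end{equation*}
The first step is therefore to dyadically partition the range $1\le z\le Z$ according to the size of $\log\eps_{z^2+3}$: for a parameter $T$, let $\mathcal{Z}_T = \{z\le Z:\ 3\nmid z,\ \mu^2(z^2+3)=1,\ \log\eps_{z^2+3}\le T\}$. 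On $\mathcal{Z}_T$ each term contributes $\ll Z\log Z/1$ trivially (using $h\ge 1$ only where needed), but more usefully each term is $\ll z\log Z/(\log\eps)$, so the key is to control $\#\mathcal{Z}_T$ for small $T$ — these are precisely the $z$ whose integer point on the surface $U$ from \eqref{logK3-2} has small height relative to $z$.

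The second, and central, step is to bound $\#\mathcal{Z}_T$ using Conjecture \ref{BW-h}. By Proposition \ref{A1-sol}, for $z$ with $3\nmid z$ and $z^2+3$ square-free the point $(y,u,z)$ on $U$ corresponding to the fundamental solution (with $y=t_1+u_1 z$) lies in $U(\ZZ)^\circ$, i.e.\ on no $\AAA^1$-curve over $\ZZ$. If $\log\eps_{z^2+3}\le T$ then $\eps_{z^2+3}\le e^T$, and since $\eps_{z^2+3}=t_1+u_1\sqrt{z^2+3}$ with $t_1,u_1\ge 1$ we get $u_1\le e^T$ and $t_1\le e^T$, hence $|y|=|t_1+u_1 z|\ll z e^T\ll Z e^T$. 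Thus all these points lie in the box of side $B\asymp Z e^T$, so Conjecture \ref{BW-h} gives $\#\mathcal{Z}_T \ll N_U^\circ(B)\ll_U (\log B)^{\rho_U+b}\ll (\log(Ze^T))^{\rho_U+b}$. A short computation of $\rho_U$ and $b$ (carried out in Section \ref{PicU}) pins down the exponent; provided it is an absolute constant, for $T$ in the relevant range $T\ll \log Z$ this reads $\#\mathcal{Z}_T\ll (\log Z + T)^{O(1)}\ll (\log Z)^{O(1)}$, a polylogarithmic bound that is essentially negligible.

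The third step assembles the sum. Split $\sum_{z} h(z^2+3) = \sum_{\log\eps\le T_0} + \sum_{\log\eps> T_0}$ for a threshold $T_0$ to be optimized. On the tail $\log\eps>T_0$ we use $h(z^2+3)\ll z\log Z/\log\eps \ll Z\log Z/T_0$ for each of the at most $Z$ values of $z$, giving $\ll Z^2\log Z/T_0$. On the head, we sum the pointwise bound $h(z^2+3)\ll z\log Z/\log\eps_{z^2+3}$ but now weighted by how many $z$ fall in each dyadic block $\log\eps\in(2^{j-1},2^j]$; by the second step each block (more precisely each $\mathcal{Z}_{2^j}$) has size $\ll(\log Z)^{O(1)}$, and each term there is $\ll Z\log Z/2^{j-1}$, so the head is $\ll \sum_{2^j\le T_0} Z\log Z\cdot 2^{-j}\cdot(\log Z)^{O(1)}\ll Z(\log Z)^{O(1)}$, which is of lower order than the eventual answer. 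Hence the whole sum is $\ll Z^2\log Z/T_0 + Z(\log Z)^{O(1)}$. Now I would make the choice of $T_0$ that balances this against the portion of the sum I have not yet accounted for — in fact the bound above is too crude, and the real source of the exponent $9/5$ must be a more careful pigeonhole: among the $z\le Z$, the number with $\log\eps_{z^2+3}$ in a given dyadic range combines the polylog bound from Conjecture \ref{BW-h} with the a priori count $Z$, and optimizing the crossover exponent (rather than the additive threshold) between "few $z$, large contribution each" and "many $z$, small contribution each" yields $Z^{9/5}(\log Z)^{3/5}$. Concretely, one bounds the contribution of $z$ with $\eps_{z^2+3}\in[B,2B]$ by $\min\!\big(Z\cdot \tfrac{Z\log Z}{\log B},\ (\log(ZB))^{O(1)}\cdot\tfrac{Z\log Z}{\log B}\big)$ is wrong in general, so instead one uses that the box of side $ZB$ contains $\ll(\log ZB)^{O(1)}$ such points \emph{and} that summing $z/\log\eps$ over an explicit long curve-free family is constrained; matching $Z^a(\log Z)^b$ terms forces $a=9/5$, $b=3/5$.

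The main obstacle I anticipate is precisely this last optimization: getting the exponent $9/5$ (and the $(\log Z)^{3/5}$) out rather than a weaker $Z^{2-\delta}$ requires using Conjecture \ref{BW-h} not once but across a whole dyadic family of boxes and feeding the resulting polylogarithmic counts back through \eqref{cnf} with the right weights, while being careful that the parametrized $\AAA^1$-solutions (the $3\mid z$ case, excluded here) are genuinely the only large contribution and that square-freeness is preserved along the relevant Ricci-type count (Remark \ref{rmk-Ricci}). Verifying that $\rho_U+b$ is small enough that $(\log B)^{\rho_U+b}$ stays subordinate throughout — i.e.\ the computation in Section \ref{PicU} — is a necessary input but should be routine.
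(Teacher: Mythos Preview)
Your overall strategy---split the sum according to the size of $\log\eps_{d(z)}$, bound the count of small-$\eps$ values via Conjecture \ref{BW-h}, and feed everything through the class number formula---is exactly the paper's. But your execution has a genuine gap that prevents you from reaching $Z^{9/5}(\log Z)^{3/5}$.

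The first problem is the threshold. You assume throughout that the relevant range is $T\ll\log Z$, and with that choice your tail $\sum_{\log\eps>T_0}h\ll Z^2\log Z/T_0$ is still $\gg Z^2$: no saving at all. You notice this (``the bound above is too crude'') but never recover. In the paper the threshold is a genuine power of $Z$: one takes $\log B=T_0=(Z\log^2 Z)^{1/5}$, which is what forces the exponent $9/5$.

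The second, and more essential, problem is the head. With $T_0\asymp Z^{1/5}$ the count $\#\mathcal{Z}_{T_0}$ from the conjecture is $\ll T_0^4$, not $(\log Z)^{O(1)}$, so you cannot simply say the head has polylogarithmically many terms. What you are missing is a \emph{uniform} lower bound on $\log\eps_{d(z)}$ valid for every $z$ in the sum, so that each individual $h(z^2+3)=\sqrt{d(z)}L_{d(z)}(1)/\log\eps_{d(z)}$ is controlled. This comes from Yamamoto's theorem (Corollary \ref{cor-yam}): for $3\nmid z$ and $d(z)$ square-free one has $\log\eps_{d(z)}\gg(\log z)^2$ unconditionally, giving $h(z^2+3)\ll Z/\log Z$ for every term in the head. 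Then head $\ll T_0^4\cdot Z/\log Z$, tail $\ll Z^2\log Z/T_0$, and balancing $T_0^5=Z(\log Z)^2$ gives the stated bound. Your dyadic blocks never invoke this input, which is why the optimisation collapses into the vague final paragraph.

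A minor point: your height estimate uses $y=t_1+u_1z$, giving a box of side $\asymp Ze^{T}$. The paper instead takes the point $(t_1-u_1z,\,u_1,\,z)\in U$ (Lemma \ref{height-func}), whose height is $u_1<\eps_{d(z)}\le e^{T}$, so $\#\mathcal{Z}_T\le N_U^\circ(e^T)\ll T^4$ cleanly. In the regime $T_0\gg\log Z$ that actually matters, your $(T+\log Z)^4$ and the paper's $T^4$ agree, so this is not the real obstruction; the missing Yamamoto input and the wrong threshold are.
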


\begin{ack} 
The author would like to thank his supervisor Tim Browning for suggesting this project and many helpful conversations and useful comments. Moreover, he is grateful to Jakob Glas, Damaris Schindler, Igor Shparlinski, Matteo Verzobio, Victor Wang, Florian Wilsch and Shuntaro Yamagishi for taking their time to answer his questions and their valuable suggestions.
\end{ack}

\section{Proof of Theorem \ref{thm-cf}}

\subsection{The initial decomposition of the Pell equation}
Fouvry and Jouve \cite{FoJo} have considered following decomposition, which can be traced back to Legendre and Dirichlet.
The Pell equation $(\ref{Pell})$ can be rearranged as
$$(t+1)(t-1) = du^2.$$
Note that $\gcd(t-1, t+1) | 2$, so there are three possibles splittings of $d$ and $u$, as follows:
\begin{itemize}
    \item If $t$ is even then $u = u_1u_2, d = d_1d_2$ and $d_1u_1^2 - d_2u_2^2 = 2$.
    \item If $t$ is odd and $4 \nmid d$, then $u = 2u_1u_2, d = d_1d_2$ and $d_1u_1^2 - d_2u_2^2 = 1$.
    \item If $t$ is odd and $4 | d$, then $u = u_1u_2, d = 4d_1d_2$ and $d_1u_1^2 - d_2u_2^2 = 1$.
\end{itemize}
We call the above splittings as the \textit{initial decomposition of the Pell equation}.

For $\eta = \pm1, \pm 2$, we consider the following function
$$N_\eta (D_1, D_2, U_1, U_2) = \# \{ (d_1, d_2, u_1, u_2) \in \ZZ^4: d_i \sim D_i, u_i \sim U_i, d_1u_1^2 - d_2u_2^2 = \eta\},$$
where the notation $n \sim N$ means $N < n \leq 2N$.
Reuss \cite[Theorem 5]{Reu} has applied the approximate determinant method to show that
\begin{eqnarray}\label{thm-Reu}
	\ \ N_\eta (D_1, D_2, U_1, U_2) \ll (D_2 U_2)^\eps \min \big((U_1U_2M)^\frac{1}{2} + U_1 + U_2, (D_1D_2M)^\frac{1}{2} + D_1 + D_2 \big), \ \ 
\end{eqnarray}
where 
$$\log M = \frac{9}{8} \frac{\log (D_1D_2) \log(U_1U_2)}{\log(D_1U_1^2)}.$$

We also need a result by Fouvry and Jouve \cite[Lemma 8]{FoJo}, which is particularly useful when $D_1$ or $D_2$ is very small.
For $D_1 \leq D_2$, we have
\begin{eqnarray}\label{thm-FJ}
	N_\eta (D_1, D_2, U_1, U_2) \ll (D_1D_2)^\eps \big((D_1D_2)^\frac{1}{2} + D_1U_2\big).
\end{eqnarray}

By combining (\ref{thm-Reu}) and (\ref{thm-FJ}), we may prove the following lemma.
\begin{lemma}\label{Reu+FJ}
	Let $\epsilon > 0$. Assume that $D_1 \leq D_2$, $D_1 D_2 \leq B$ and $D_1U_1^2 = D_2 U_2^2 = B$, then for any $\eta = \pm 1$ or $\pm 2$, we have
	$$N_\eta(D_1, D_2, U_1, U_2) \ll B^{\frac{7}{12} + \epsilon}.$$
\end{lemma}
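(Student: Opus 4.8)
The plan is to reparametrize the four quantities logarithmically and then balance Reuss's estimate $(\ref{thm-Reu})$ against the Fouvry--Jouve estimate $(\ref{thm-FJ})$, using whichever is stronger in each range. I would write $D_1 = B^\alpha$ and $D_2 = B^\beta$; the constraints $D_iU_i^2 = B$ then force $U_1 = B^{(1-\alpha)/2}$ and $U_2 = B^{(1-\beta)/2}$, while $D_1 \le D_2$ and $D_1 D_2 \le B$ become $0 \le \alpha \le \beta$ and $\sigma := \alpha + \beta \le 1$ (we may assume $D_1, D_2, U_1, U_2 \ge 1$, the remaining corner cases being trivial). The point that makes everything clean is that $D_1 U_1^2 = B$ is precisely the denominator in the definition of $M$, so $M$ collapses to $B^{\frac{9}{16}\sigma(2-\sigma)}$, a function of $\sigma$ alone. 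Substituting into $(\ref{thm-Reu})$, the two alternatives inside the minimum become $B^{g(\sigma)}$ and $B^{\max(h(\sigma),\,\beta)}$ up to a factor $B^\epsilon$ (the lower-order terms $U_1, U_2$ are $\le B^{1/2} \le B^{g(\sigma)}$, while $D_1, D_2 \le B^\beta$, and $D_2 U_2 \le B$), where $g(\sigma) = \frac12 + \frac{5\sigma}{16} - \frac{9\sigma^2}{32}$ and $h(\sigma) = \frac{17\sigma}{16} - \frac{9\sigma^2}{32}$.

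The remaining work is numerology. One checks that $g(\sigma) = \frac{7}{12}$ has the two roots $\sigma = \frac49$ and $\sigma = \frac23$ and that $g$ is concave, so $g(\sigma) \le \frac7{12}$ exactly when $\sigma \le \frac49$ or $\sigma \ge \frac23$; and that $h$ is increasing on $[0,1]$ with $h(\frac23) = \frac7{12}$, so $h(\sigma) \le \frac7{12}$ exactly when $\sigma \le \frac23$. Thus $(\ref{thm-Reu})$ alone handles every $\sigma \notin (\frac49, \frac23)$ via the $g$-branch, and for $\sigma \in (\frac49, \frac23)$ it handles the subcase $\beta \le \frac7{12}$ via the $h$-branch. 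The only region left is $\sigma \in (\frac49, \frac23)$ with $\beta > \frac7{12}$; there $\alpha = \sigma - \beta < \frac23 - \frac7{12} = \frac1{12}$, so $D_1$ is tiny and this is exactly where $(\ref{thm-FJ})$ is strong: $(D_1 D_2)^{1/2} = B^{\sigma/2} < B^{1/3}$ and $D_1 U_2 = B^{\alpha + (1-\beta)/2}$ with $\alpha + \frac{1-\beta}{2} < \frac1{12} + \frac12 - \frac7{24} = \frac7{24}$, so that $N_\eta \ll B^{1/3 + \epsilon}$ in this last region. Assembling the three cases gives the claimed bound $B^{7/12+\epsilon}$.

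I do not expect a genuine obstacle: the argument is just a two-way (really three-way) interpolation, and the only mildly interesting feature is that the relevant exponents all meet exactly at $\sigma = \frac23$, which is what produces the exponent $\frac7{12}$. What needs care is (i) checking that the three cases genuinely exhaust the parameter region and that the boundary values $\sigma \in \{\frac49, \frac23\}$ and $\beta = \frac7{12}$ each fall on the admissible side --- they do, with equality, which is enough since we only claim $B^{7/12+\epsilon}$; and (ii) verifying that the parametrization is legitimate, handling the edge cases where some $D_i$ or $U_i$ is $O(1)$, and that the factors $(D_2 U_2)^\epsilon$ in $(\ref{thm-Reu})$ and $(D_1 D_2)^\epsilon$ in $(\ref{thm-FJ})$ are absorbed into $B^\epsilon$, which is immediate since $D_2 U_2 \le B$ and $D_1 D_2 \le B$.
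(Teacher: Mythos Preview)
Your proof is correct and rests on the same ingredients as the paper: the logarithmic parametrization, the observation that $M$ depends only on $\sigma = \alpha + \beta$ (the paper's $k$), and balancing the two branches of Reuss's bound against Fouvry--Jouve. The case decomposition is organized differently, however. The paper first peels off the region $D_1 U_2 \le B^{7/12}$, where $(\ref{thm-FJ})$ applies directly, and in the complementary region derives $k > \tfrac13$ and $D_2/D_1 < B^{2/9}$ before splitting at $k = \tfrac23$ to choose between the $D$- and $U$-branches of $(\ref{thm-Reu})$; in particular it needs the auxiliary inequality $M > D_2/D_1$ to ensure that the $D_2$-term is dominated by $(D_1D_2M)^{1/2}$. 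Your arrangement---first disposing of $\sigma \notin (\tfrac49,\tfrac23)$ via the $U$-branch, then splitting the middle range at $\beta = \tfrac{7}{12}$---is slightly cleaner, since it sidesteps that auxiliary step by simply carrying $\beta$ as part of the maximum on the $D$-branch. Both routes identify the same extremal $\sigma = \tfrac23$ and yield the same exponent $\tfrac{7}{12}$.
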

\begin{proof}
	When $D_1 U_2 \leq B^{\frac{7}{12}}$, then (\ref{thm-FJ}) implies 
	$$N_\eta(D_1, D_2, U_1, U_2) \ll B^\epsilon (B^{\frac{1}{2}} + B^{\frac{7}{12}}) \ll B^{\frac{7}{12} + \epsilon}.$$
	
    Next we observe that if $\log (D_1D_2) = k \log B$, then 
    \begin{eqnarray}\label{log-M}
    	m: = \frac{\log M}{\log B} = \frac{9}{16} k (2 - k).
    \end{eqnarray}
    Since $0 \leq k \leq 1$, the function $m(k)$ is an increasing function. 
	
	When $D_1 U_2 > B^{\frac{7}{12}}$, then considering $D_2U_2^2 = B$, we know 
	$$D_1 D_2^{-\frac{1}{2}} = D_1U_2B^{-\frac{1}{2}} > B^{\frac{1}{12}}.$$ 
	Since $D_1D_2 \leq B$, we obtain
	$$D_2D_1^{-1} = (D_1D_2)^{\frac{1}{3}} \cdot (D_1D_2^{-\frac{1}{2}})^{-\frac{4}{3}} < B^{\frac{1}{3} - \frac{1}{9}} =    B^{\frac{2}{9}}.$$
	On the other hand, using $D_1 \leq D_2$, we know  $$D_1D_2 = (D_1^{-1}D_2)^3 \cdot (D_1D_2^{-\frac{1}{2}})^4 \geq (D_1 D_2^{-\frac{1}{2}})^4 > B^{\frac{1}{3}}.$$ 
	In other words, we have $k > \frac{1}{3}$. Since $m(k)$ is increasing when $0 \leq k \leq 1$, we have $m(k) > m(\frac{1}{3}) = \frac{5}{16}$. Hence we have
	\begin{eqnarray}\label{estimate-M}
		M > B^{\frac{5}{16}}.
	\end{eqnarray}
	Therefore, it follows that 
	$$M > B^{\frac{2}{9}} \geq D_2D_1^{-1}, \text{ which implies } (D_1D_2M)^{\frac{1}{2}} > D_2 \geq D_1.$$
	If we further assume that $k \leq \frac{2}{3}$, then by (\ref{thm-Reu}) we have
	$$N_\eta(D_1, D_2, U_1, U_2) \ll B^\epsilon (D_1D_2M)^{\frac{1}{2}} \ll B^{\frac{1}{2}(\frac{2}{3} + m(\frac{2}{3})) + \epsilon} = B^{\frac{7}{12} + \epsilon},$$
	since $m(k) + k$ is also an increasing function. 
	
The remaining case is when $D_1 U_2 > B^{\frac{7}{12}}$ and $k > \frac{2}{3}$. Note that $$U_1U_2 = (BD_1^{-1})^{\frac{1}{2}} \cdot (BD_1^{-1})^{\frac{1}{2}} = B^{1 - \frac{k}{2}}.$$ 
By (\ref{estimate-M}), we obtain that
$$M > B^{\frac{5}{16}} > B^{\frac{1}{9}} \geq (D_2D_1^{-1})^{\frac{1}{2}} = U_1U_2^{-1}, \text{ which implies }  (U_1U_2M)^{\frac{1}{2}} > U_1 \geq U_2.$$ 
We also have
$$(U_1U_2M)^{\frac{1}{2}} = B^{\frac{1}{2}(1  - \frac{k}{2} + m(k))}.$$
Note that the function 
$- \frac{k}{2} + m(k) = \frac{1}{16}k(10-9k)$
 is decreasing when $k > \frac{5}{9}$. Therefore, by the assumption $k > \frac{2}{3}$ and (\ref{thm-Reu}) we have
$$N_\eta(D_1, D_2, U_1, U_2) \ll B^\epsilon (U_1U_2M)^{\frac{1}{2}} \ll B^{\frac{1}{2}(1 - \frac{1}{3} + m(\frac{2}{3})) + \epsilon} = B^{\frac{7}{12} + \epsilon}.$$
	
	%When $D_1M > D_2$, we have $(D_1D_2M)^{\frac{1}{2}} > D_2 \geq D_1$. 
	
	%When $D_2 \leq B^{\frac{7}{12}}$, we have
	
	%When $D_1 U_2 > B^{\frac{7}{12}}$, we have $D_2 D_1^{-1} < B^{\frac{5}{12}}$ since $D_2 U_2 \leq D_2U_2^2 = B$. 
	
	%When $D_1 U_2 > B^{\frac{7}{12}}$ and $D_1D_2 \leq B^{\frac{2}{3}}$, then by $(\ref{thm-Reu})$, we have
	%$$N_\eta(D_1, D_2, U_1, U_2) \ll B^\epsilon (D_1D_2M)^{\frac{1}{2}} \ll .$$
\end{proof}
As a remark, one may find that the extreme case appears when $k = \frac{2}{3}$ and $D_1 U_2 \geq B^{\frac{7}{12}}$. This is  when $D_1D_2 = B^{\frac{2}{3}}$ and $B^{\frac{5}{18}} \leq D_1 \leq B^{\frac{1}{3}} \leq D_2 \leq B^{\frac{7}{18}}$.

\subsection{Proof of Theorem \ref{thm-cf}}
Without loss of generality, we may assume that $B$ is an integer to ensure that $2t = (\eps_d + \eps_d^{-1}) \leq B \Leftrightarrow \eps_d  \leq B$.

For the lower bound, we use the fact that
        \begin{align*}
            N(B) & \gg \, \# \{ (t,d,u) \in \ZZ^3: t^2 - du^2 = 1, 2 \leq t \leq d \leq B \} \\
            & \geq \ \# \{ d \in \ZZ: 2 \leq d \leq B, \eps_d \leq d \} \\
            & \gg B^\frac{1}{2}(\log B)^2,
        \end{align*}
        by Hooley's Theorem (\ref{thm-Hoo}) for $\alpha = \frac{1}{2}$.
        
        For the upper bound, we consider the number of solutions in dyadic intervals:
        \begin{equation}\label{dya}
            N(2B) - N(B) = M_1(B) + M_2(B),
        \end{equation}
        where
        \begin{equation*}
            M_1(B) = \# \{ (t,d,u) \in \ZZ^3: t^2 - du^2 = 1, d \sim B, t \leq 2B \},
        \end{equation*}
        and
        \begin{equation*}
            M_2(B) = \# \{ (t,d,u) \in \ZZ^3: t^2 - du^2 = 1, d \leq B, t \sim B \}.
        \end{equation*}
        By Hooley's Theorem (\ref{thm-Hoo}) for $\alpha = \frac{1}{2}$, we have 
        \begin{align}
        	M_1(B) & \leq \, \# \{ d \in \ZZ: 2 \leq d \leq 2B, \eps_d \leq 2d \} \\
            & \ll B^{\frac{1}{2}} (\log B)^2.
        \end{align}
        Note that $2t = \eta_d + \eta_d^{-1} > \eta_d = (\eps_d)^n$. Hence by (\ref{lower-bound}) we have
        \begin{align*}
        	& \ \#\{ (t,d,u) \in \ZZ^3: t^2 - du^2 = 1, d \leq B^{\frac{1}{2}}, t \sim B\} \\
        	 \leq & \ \#\{ (t,d,u) \in \ZZ^3: t^2 - du^2 = 1, d \leq B^{\frac{1}{2}}, \eta_d \leq 4B \} \\
        	 \leq & \ \sum_{d \leq B^{\frac{1}{2}}} \left[ \frac{\log 4B}{\log \eps_d} \right] \, \ll \, \sum_{d \leq B^{\frac{1}{2}}} \left[ \frac{\log 4B}{\log d} \right] \\
        	 \ll & \ \log B \cdot \frac{B^{\frac{1}{2}}}{\log B} \, = \, B^{\frac{1}{2}}. 
        \end{align*}
        This implies that $M_2(B) = M_3(B) + O(B^{\frac{1}{2}})$, where
        \begin{eqnarray*}
        	M_3(B) =  \#\{ (t,d,u) \in \ZZ^3: t^2 - du^2 = 1, B^{\frac{1}{2}} < d \leq B, t \sim B \}.
        \end{eqnarray*}    
        If we assume Hooley's Conjecture (\ref{HooConj}) is true for $\alpha = \frac{3}{2}$, we have 
        \begin{eqnarray*}
        	M_3(B) \leq  \#\{ (t,d,u) \in \ZZ^3: t^2 - du^2 = 1, d \leq B, t \leq 2d^2 \} \ll B^{\frac{1}{2}} (\log B)^2.
        \end{eqnarray*}
        By using (\ref{dya}), this completes the proof of part (2) of Theorem \ref{thm-cf}.

        Unconditionally, we may divide $D_1$ and $D_2$ into $\ll (\log B)^2$ dyadic intervals. By using the initial decomposition of the Pell equation and Lemma \ref{Reu+FJ}, we have
        \begin{align*}
        	M_2(B) 
        	\ll \sup_{\substack{D_1D_2 \leq B, D_1 \leq D_2 \\ D_1U_1^2 = D_2U_2^2 = B, \ \eta = \pm 2}} N_\eta(D_1,D_2,U_1,U_2) \cdot (\log B)^2 \ll B^{\frac{7}{12} + \epsilon}.
        \end{align*}

\vspace{1em}
\section{Proof of Theorem \ref{thm-lb} and Theorem \ref{thm-cn}}\label{sec3}

%Let $k \in \ZZ$. We consider the following equation
%\begin{eqnarray*}
%	2uvt = v^2 - kt^2 - 1.
%\end{eqnarray*}
%By changing of variable $t = v - uz$, we get
%\begin{eqnarray*}
%	t^2 - (z^2 + k)u^2 = 1.
%\end{eqnarray*}

%We consider the special case when $k = 3$, namely we consider the following equation
%$$t^2 - du^2 = 1, \text{ where } d = z^2 + 3.$$

We recall the equation (\ref{Pell-2}) from the introduction. This takes the shape
\begin{eqnarray*}
	t^2 - d(z)u^2 = 1, \text{ where } d(z) = z^2 + 3.
\end{eqnarray*}
For all $z = 3k > 0$, we have
$$\eps_{d(z)} = (6k^2 + 1) + \sqrt{d(z)} \cdot 2k.$$
Hence for each $z$ which is a multiple of $3$, the integer point $(t,z,u)$ lies on an $\AAA^1$-curve defined over $\ZZ$. 

\subsection{The $\AAA^1$-curve solutions}

The main result of this section is the following proposition.
\begin{prop}\label{A1-sol}
	Let $(t,u,z_0)$ be an integer solution of (\ref{Pell-2}) such that $3 \nmid z_0$, $d(z_0)$ square-free and $u \neq 0$. Then it does not lie on an $\AAA^1$-curve defined over $\ZZ$.
\end{prop}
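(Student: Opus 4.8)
The plan is to argue by contradiction. Suppose the point $(t,u,z_0)$ lies on an $\AAA^1$-curve defined over $\ZZ$, and let $\phi(s)=(T(s),U(s),Z(s))$ with $T,U,Z\in\QQ[s]$ be a parametrisation, with $\phi(s_0)=(t,u,z_0)$ for some $s_0\in\overline{\QQ}$. The first step is to eliminate the case $Z$ constant. If $Z\equiv z_0$, then the curve lies in the affine conic $t^2-d(z_0)u^2=1$; since $u\neq 0$ and a Pell equation with square coefficient has only trivial solutions, $d(z_0)=z_0^2+3$ is not a perfect square, so this conic is $\PP^1_\QQ$ with the Galois-conjugate pair $(\pm\sqrt{d(z_0)}:1:0)$ at infinity removed — a non-split one-dimensional torus. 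Such a torus admits no non-constant morphism from $\AAA^1$ (after base change to $\overline{\QQ}$ such a morphism is a non-constant unit of $\overline{\QQ}[s]$), so $\phi$ would be constant, which is absurd. Hence $Z$ is non-constant.

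The second and central step is to classify these curves through the function-field Pell equation. Put $D(s)=Z(s)^2+3\in\QQ[s]$; as $Z$ is non-constant, $D$ is not a square in $\QQ[s]$ (from $(w-Z)(w+Z)=3$ one gets $w\pm Z\in\QQ$). I would record as a lemma that the solutions of $t^2-D(s)u^2=1$ in $\QQ[s]$ form the group $\{\pm\eps(s)^m:m\in\ZZ\}$, where
$$\eps(s)=\tfrac13\big((2Z(s)^2+3)+2Z(s)\sqrt{D(s)}\big)$$
is the fundamental solution: it is a solution because $(2Z^2+3)^2-4Z^2(Z^2+3)=9$, and it has least degree by the usual count — any solution $(t,u)$ with $u\not\equiv 0$ satisfies $\deg t=\deg u+\deg Z$, and making $\deg u$ minimal forces $t-Zu$ to be a constant, whence $t=\pm(2Z^2+3)/3$ and $u=\pm 2Z/3$. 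Applying this to $\phi$ gives $T(s)+U(s)\sqrt{D(s)}=\pm\eps(s)^m$; here $m\neq 0$ because $U(s_0)=u\neq 0$, and replacing $\eps$ by $\eps^{-1}=\tfrac13\big((2Z^2+3)-2Z\sqrt{D}\big)$ if necessary we may take $m\geq 1$.

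The final step is a reduction modulo $3$. Write $\big((2X^2+3)+2X\sqrt{X^2+3}\big)^m=A_m(X)+B_m(X)\sqrt{X^2+3}$ with $A_m,B_m\in\ZZ[X]$, so that $T(s)=\pm A_m(Z(s))/3^m$ as an identity in $\QQ[s]$. Modulo $3$ one has $(2X^2+3)+2X\sqrt{X^2+3}\equiv -X\big(X+\sqrt{X^2+3}\big)$ and $\big(X+\sqrt{X^2+3}\big)^2\equiv -X\big(X+\sqrt{X^2+3}\big)$, so an immediate induction gives $A_m(X)\equiv -X^{2m}\pmod 3$. Evaluating $T(s)=\pm A_m(Z(s))/3^m$ at $s=s_0$ and using $Z(s_0)=z_0$, $T(s_0)=t\in\ZZ$, we obtain $A_m(z_0)=\pm 3^m t\in 3\ZZ$ since $m\geq 1$; but $A_m(z_0)\equiv -z_0^{2m}\not\equiv 0\pmod 3$ because $3\nmid z_0$. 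This contradiction proves the proposition.

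The hard part is the classification lemma of the second step: one must know that, uniformly in the non-constant polynomial $Z$, every solution of the Pell--Abel equation $t^2-(Z(s)^2+3)u^2=1$ is $\pm\eps(s)^m$ for the explicit $\eps(s)$ above, so that in particular the $\tfrac13$-denominator in $\eps(s)$ cannot disappear after taking powers unless $3\mid Z(s)$. Granting this, the $3$-adic obstruction is routine. The hypotheses then enter transparently: $u\neq 0$ both excludes the degenerate lines $u\equiv 0$ and forces $d(z_0)$ to be a non-square in step one, while $3\nmid z_0$ is exactly what makes the congruence in step three fail. The square-free hypothesis, needed for the applications in the following subsections, plays no essential role in the argument sketched here.
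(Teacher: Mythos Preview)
Your argument is correct and takes a genuinely different route from the paper's. The paper never computes the polynomial Pell solutions; instead it combines Yamamoto's lower bound $\log\eps_{d(z)}\gg(\log\sqf(d(z)))^2$ (Corollary~\ref{cor-yam}) with a Luca--Shparlinski-type estimate (Lemma~\ref{lemma-sf}) to force $\log|t(z)|/\log d(z)\to\infty$ along any polynomial family with $3\nmid f(z)$ infinitely often, contradicting the finiteness of $\deg t/\deg d$; this pushes the conclusion $3\mid f(0)=z_0$. Your approach replaces this analytic input by an explicit algebraic one: you identify the fundamental solution of the Pell--Abel equation $t^2-(Z^2+3)u^2=1$ over $\QQ[s]$ as $\eps(s)=\tfrac13\big((2Z^2+3)+2Z\sqrt{Z^2+3}\big)$, and then the obstruction becomes a clean $3$-adic congruence $A_m(X)\equiv -X^{2m}\pmod 3$, which is incompatible with $3\nmid z_0$ and $T(s_0)\in\ZZ$. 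Your proof is more elementary and self-contained---it needs neither Yamamoto's theorem nor the square-free-part estimate---and it makes transparent why the hypothesis that $d(z_0)$ be square-free is unnecessary here (indeed the paper's own proof does not use it either). The paper's route, on the other hand, fits naturally into its narrative about sizes of fundamental solutions and reuses tools already established for the main theorems. Your Step~1 (ruling out constant $Z$ via the non-split torus) is also handled more carefully than in the paper, which simply asserts that the parametrising polynomials are non-constant.
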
 

Yamamoto \cite[Theorem 3.1; Example 4 (I)]{Yam} has proved the following lower bound for the fundamental unit of a special class of real quadratic fields.
\begin{theorem}[Yamamoto]\label{thm-Yam}
   Let $K_\alpha = \QQ \big(\sqrt{\alpha^2 \pm 4p} \big)$ for a given prime $p$. Assume that $p$ splits in $K_\alpha$. Then we have 
$$\log \eps_{K_\alpha} \gg (\log \Delta_{K_\alpha})^2.$$
\end{theorem}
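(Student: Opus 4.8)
The plan is to translate a lower bound for $\eps_{K_\alpha}$ into a lower bound for the number of small reduced principal ideals of $\OO_{K_\alpha}$, and then exploit the fact that all such ideals sit on a single cycle whose total length is $\log\eps_{K_\alpha}$. Write $D=\alpha^2\pm4p$ and let $\mathfrak f$ be the conductor of $\ZZ[\tfrac{1+\sqrt D}{2}]$ (or $\ZZ[\sqrt D]$) inside $\OO_{K_\alpha}$, so that $\Delta_{K_\alpha}$ and $D/\mathfrak f^2$ have the same order of magnitude; implicit in the statement is that $\mathfrak f$ stays bounded along the family, which is what makes $\log\Delta_{K_\alpha}\asymp\log\alpha$ and is needed for the conclusion to have content. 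The starting point is the elementary inequality $\log\eps_{K_\alpha}>\tfrac12\log\Delta_{K_\alpha}-O(1)$, valid for every real quadratic field (from $\eps_{K}=\tfrac{a+b\sqrt{\Delta_K}}{2}$ with $b\ge1$); the theorem asks to save an additional factor of $\log\Delta_{K_\alpha}$ over this.

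The first input is a principal prime ideal of bounded norm. Since $D\equiv\alpha^2\pmod{4p}$, the number $\beta=\tfrac{\alpha+\sqrt D}{2}$ is an algebraic integer satisfying $X^2-\alpha X\mp p=0$, so $N(\beta)=\mp p$ and $(\beta)$ is a prime ideal of $\OO_{K_\alpha}$ of norm $p$. The hypothesis that $p$ splits enters precisely here: it forces $p\nmid\alpha$, hence $\bigl(\tfrac{D}{p}\bigr)=\bigl(\tfrac{\alpha^2}{p}\bigr)=1$ and $(\beta)=\mathfrak p$ is genuinely one of the two primes above $p$, rather than a ramified one. The key second input is a further principal prime ideal $\mathfrak q$ of bounded norm $q$, not a power of $\mathfrak p$: I would extract it from the arithmetic of the particular values of $\alpha$ in the statement — the divisibility and congruence conditions imposed there are designed so that a small prime $q$ splits in $K_\alpha$ with a principal prime above it, a generator again being a short quadratic expression in $\alpha$, exactly as for $\mathfrak p$.

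With $\mathfrak p$ and $\mathfrak q$ in hand, I would consider the ideals $\mathfrak p^{\,j}\mathfrak q^{\,k}$ with $p^{\,j}q^{\,k}\le\sqrt{\Delta_{K_\alpha}}$. There are $\asymp(\log\Delta_{K_\alpha})^2$ admissible pairs $(j,k)$; the corresponding ideals are pairwise distinct (their common norm $p^{\,j}q^{\,k}$ recovers $(j,k)$ since $p,q$ are fixed distinct primes), each is principal, and each is primitive of norm $<\sqrt{\Delta_{K_\alpha}}$, hence reduced. Now invoke the standard structure of the principal class: the reduced principal ideals are exactly the complete quotients occurring in the continued fraction expansion attached to $\OO_{K_\alpha}$, they form a single cycle, consecutive members of the cycle are at distance bounded below by an absolute positive constant, and the total length of the cycle equals the regulator $R_{K_\alpha}=\log\eps_{K_\alpha}$. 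Consequently $\log\eps_{K_\alpha}\gg\#\{\text{reduced principal ideals}\}\gg(\log\Delta_{K_\alpha})^2$, as required.

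The main obstacle is the second input: the bare hypothesis ``$p$ splits in $K_\alpha$'' supplies only the single principal prime $\mathfrak p$, and that alone recovers nothing beyond the trivial bound $\log\eps_{K_\alpha}\gg\log\Delta_{K_\alpha}$ (from the $\asymp\log\Delta_{K_\alpha}$ reduced ideals $\mathfrak p^{\,k}$ and $(\mathfrak p')^{\,k}$), so the extra power of $\log$ must be squeezed out of the finer arithmetic of the specified $\alpha$, and one must then check that the products $\mathfrak p^{\,j}\mathfrak q^{\,k}$ introduce no coincidences and remain primitive. Two subsidiary points also need care: controlling the conductor $\mathfrak f$ so that $\Delta_{K_\alpha}$ really is of size $\alpha^2$ (a large square factor of $D$ would shrink the range $p^{\,j}q^{\,k}\le\sqrt{\Delta_{K_\alpha}}$ and collapse the count), and fixing the precise conventions for ``reduced ideal'' and ``distance on the principal cycle'' so that the inequality $\#\{\text{reduced principal ideals}\}\ll R_{K_\alpha}$ holds with usable constants.
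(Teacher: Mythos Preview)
The paper does not supply its own proof of this result; it is quoted from Yamamoto (the citation reads ``[Theorem~3.1; Example~4~(I)]'') and used as a black box thereafter. So there is no in-paper argument to compare your proposal against.

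On the substance: the gap you flag yourself is fatal for the scheme as written. The hypothesis hands you exactly one small principal prime $\mathfrak p=(\beta)$ with $\beta=(\alpha+\sqrt D)/2$, and since $\mathfrak p^{\,j}\bar{\mathfrak p}^{\,k}$ is primitive only when $\min(j,k)=0$, you obtain merely $O(\log\Delta_{K_\alpha})$ reduced principal ideals, hence only the trivial bound $\log\eps_{K_\alpha}\gg\log\Delta_{K_\alpha}$. There is no ``finer arithmetic of the specified $\alpha$'' in the statement from which to manufacture a second small principal split prime $\mathfrak q$, so the $(\log\Delta)^2$ reduced ideals your counting argument requires are simply not there. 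Yamamoto's actual mechanism for the missing logarithm is different: rather than introducing a second prime, he exploits that the single generator $\beta$ is itself large, $\log|\beta|\asymp\log\Delta_{K_\alpha}$, and controls not merely the \emph{number} of the reduced ideals $\mathfrak p^{\,j}$ but their \emph{location} along the principal cycle. The $\asymp\log\Delta/\log p$ available exponents $j$, each contributing a step of length $\log|\beta|\asymp\log\Delta$ along the cycle, together force the regulator to satisfy $\log\eps_{K_\alpha}\gg(\log\Delta_{K_\alpha})^2$. Your reduced-ideal count sees only one of these two factors of $\log\Delta$.

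A small side remark: your assertion that ``consecutive members of the cycle are at distance bounded below by an absolute positive constant'' is not literally true (a partial quotient $a_i=1$ can make $\log\alpha_i$ arbitrarily small), though the conclusion $\log\eps_K\gg\ell$ you draw from it is correct, via $\alpha_i\alpha_{i+1}>2$.
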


Let $K$ be a real quadratic field. Let $t_1 + u_1 \sqrt{d}$ be the fundamental solution of the Pell equation for $d = c^2 \Delta_K, c \in \ZZ_{>0}$. We have $a^2 - \Delta_K \cdot b^2 = 4$ for $a = 2t_1$ and $b = 2cu_1$. Therefore, for any $c \in \ZZ_{>0}$ we have
\begin{equation}\label{tool-1}
	\eps_K \leq \eps_{c^2 \Delta_K}.
\end{equation}
The following corollary gives a lower bound for the fundamental solution of (\ref{Pell-2}).
\begin{corollary}\label{cor-yam}
	Let $z \geq 2$ which is not a multiple of $3$, and let $d(z) = z^2 + 3$. We have 
\begin{eqnarray}\label{cor-yam-for}
	\log \eps_{d(z)} \gg \big(\log \sqf(d(z))\big)^2.
\end{eqnarray}
\end{corollary}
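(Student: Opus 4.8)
The plan is to combine Yamamoto's Theorem \ref{thm-Yam} with the inequality \eqref{tool-1}, which lets us pass from a fundamental unit of the field $K$ to the fundamental solution of the Pell equation for any multiple $c^2\Delta_K$ of the discriminant. Write $d(z)=z^2+3$ and let $d_1=\sqf(d(z))$ be its square-free part, so that $d(z)=c^2 d_1$ for some $c\in\ZZ_{>0}$. Then $\Delta_K\in\{d_1,4d_1\}$ is the discriminant of $K=\QQ(\sqrt{d_1})=\QQ(\sqrt{d(z)})$, and in either case $d(z)$ is an integer square times $\Delta_K$; hence \eqref{tool-1} gives $\log\eps_{d(z)}\geq\log\eps_K$. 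So it suffices to show $\log\eps_K\gg(\log\Delta_K)^2\gg(\log d_1)^2$, since $\Delta_K\asymp d_1$.

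To invoke Theorem \ref{thm-Yam} we must realize $K$ in the form $\QQ(\sqrt{\alpha^2\pm 4p})$ for a suitable prime $p$ that splits in $K$. The natural choice exploits the identity $z^2+3$ already being of this shape: taking $\alpha=z-1$ we have $(z-1)^2+4 = z^2-2z+5$, which is not quite right, so instead use $\alpha=z$ directly together with the observation $z^2+3=z^2-4\cdot(-1)\cdot\tfrac{3}{?}$ — cleaner is to note $z^2+3 = (z-1)^2 + 2(z-1) + 4$, still not of the required form. The genuinely clean route is: pick the prime $p$ in the factorization of the square-free part, or rather write $4p$ so that $\alpha^2+4p$ or $\alpha^2-4p$ equals $d(z)$ up to a square factor; concretely, since we only need $K_\alpha=\QQ(\sqrt{\alpha^2\pm4p})$ to equal $K$, it is enough to choose $\alpha$ and $p$ with $\alpha^2\pm 4p = d_1\cdot(\text{square})$, and the condition "$p$ splits in $K$" is exactly "$\bigl(\tfrac{\Delta_K}{p}\bigr)=1$", which is automatic when $p$ is a prime factor of $d_1$ only if $p\mid\Delta_K$ (ramified, not split) — so $p$ must be chosen outside $d_1$. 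Thus the real content is an elementary but careful parametrization showing that for $z$ not divisible by $3$ one can always write $d(z)$, up to a square, as $\alpha^2\pm 4p$ with $p$ a split prime; the cleanest version is probably $4\cdot d(z) = (2z)^2 + 12 = (2z+2)^2 - 8z + 8 = \dots$, choosing the representation that isolates a single split prime, and I would carry this out by hand for the two residue classes $z\equiv1,2\pmod 3$.

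The main obstacle is precisely this last step: producing, uniformly in $z$ (with $3\nmid z$), a prime $p$ splitting in $\QQ(\sqrt{d(z)})$ such that $d(z)$ agrees up to a square factor with $\alpha^2\pm 4p$ for some integer $\alpha$. One expects this to follow from the algebraic identity $z^2+3 = (z+2)^2 - 4(z+1) + \dots$ after massaging, or alternatively from the fact that $d(z)$ itself is visibly $z^2 - 4\cdot\frac{-3}{4}$ — since $-3$ is not $\pm4p$, one instead works with $4d(z) = (2z)^2 + 12$, and $12 = 4\cdot 3$, so $4d(z) = (2z)^2 + 4\cdot 3$: this is $\alpha^2 + 4p$ with $\alpha=2z$, $p=3$. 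But $p=3$ ramifies iff $3\mid\Delta_K$; since $3\nmid z$ we have $d(z)=z^2+3\equiv 1\pmod 3$, so $3\nmid d(z)$, hence $3$ is unramified, and $3$ splits in $\QQ(\sqrt{d(z)})$ iff $d(z)\equiv 1\pmod 3$, which holds for all $z$ with $3\nmid z$. Therefore $p=3$, $\alpha=2z$ works, $K_\alpha=\QQ(\sqrt{(2z)^2+12})=\QQ(\sqrt{4d(z)})=\QQ(\sqrt{d(z)})=K$, the splitting hypothesis is verified, and Theorem \ref{thm-Yam} yields $\log\eps_K\gg(\log\Delta_K)^2$. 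Combining with $\log\eps_{d(z)}\geq\log\eps_K$ and $\Delta_K\asymp\sqf(d(z))$ finishes the proof; the only routine verifications left are the two congruence computations and the constant tracking in $\Delta_K\asymp d_1$.
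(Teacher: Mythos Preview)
Your argument, after the exploratory detours, lands on exactly the paper's proof: take $\alpha=2z$ and $p=3$ in Yamamoto's theorem, observe that $3\nmid z$ gives $d(z)\equiv 1\pmod 3$ so that $3$ splits in $K=\QQ(\sqrt{d(z)})$, and then compare $\eps_{d(z)}$ with $\eps_K$ via \eqref{tool-1}. One small imprecision (which the paper's citation of \eqref{tool-1} also elides): when $z$ is even one has $d(z)\equiv 3\pmod 4$ with odd square-free part $d_1\equiv 3\pmod 4$, so $\Delta_K=4d_1$ and $d(z)$ is \emph{not} an integer square times $\Delta_K$; the needed inequality $\eps_{d(z)}\geq\eps_K$ nonetheless holds, since $\eps_{d(z)}\in\OO_K^\times$ with $\eps_{d(z)}>1$ forces $\eps_{d(z)}=\eps_K^n$ for some $n\geq 1$.
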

\begin{proof}
We choose $\alpha = 2z$ and $p = 3$ in Theorem \ref{thm-Yam}. When $3 \nmid z$, we know that $p$ splits in the real quadratic field $K_z = \QQ(\sqrt{4z^2 + 12}) = \QQ(\sqrt{d(z)})$. Therefore, by (\ref{tool-1}) and Theorem \ref{thm-Yam}, we get
	$$\log \eps_{d(z)} \geq \log \eps_{K_z} \gg (\log \Delta_{K_z})^2 \gg \big(\log \sqf(d(z))\big)^2.$$
\end{proof}

Note that by using Corollary \ref{cor-yam} and the class number formula \ref{cnf}, we may obtain the upper bound (\ref{Yam-tri}) for the average of class numbers.

\begin{remark}
	The lower bound (\ref{cor-yam-for}) is indeed sharp. For example, Golubeva \cite{Gol1} has shown that
	\begin{eqnarray*}
		\eps_{d(z)} \leq 2 \, \bigg( \Big(\frac{z+\sqrt{d(z)}}{3} \Big)^n \Big(\frac{2+\sqrt{d(z)}}{z+1} \Big) \Big(\frac{z-1+\sqrt{d(z)}}{2} \Big) \bigg)^2,
	\end{eqnarray*}
	for $z = 3^n + 1$.
\end{remark}

We also need the following lemma on the square-free part of polynomial values.
\begin{lemma}\label{lemma-LS}
	Let $f \in \ZZ[x]$ be a separable polynomial with degree $\geq 2$. Then for almost all $z \in \ZZ_{>0}$, we have
	\begin{equation*}
		\sqf(f(z)) \geq z^{\frac{2}{3} - \epsilon}.
	\end{equation*}
\end{lemma}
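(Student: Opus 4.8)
The plan is to bound the number of $z \le Z$ for which $\sqf(f(z)) < Z^{2/3 - \epsilon}$, and show this count is $o(Z)$. Write $f(z) = \sqf(f(z)) \cdot m^2$ where $m = m(z)$ is the largest integer whose square divides $f(z)$. The condition $\sqf(f(z)) < Z^{2/3-\epsilon}$ together with $|f(z)| \asymp z^{\deg f} \ge z^2$ forces $m(z) \gg z^{(\deg f)/2} \cdot Z^{-1/3 + \epsilon/2} \gg Z^{2/3 + \epsilon/2}$ for $z$ in a dyadic range near $Z$ (using $\deg f \ge 2$; the case $\deg f \ge 3$ only makes $m$ larger, so it suffices to treat $\deg f = 2$, or simply use $\deg f \ge 2$ throughout). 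So it is enough to count $z \le Z$ admitting a divisor $m \ge M_0 := Z^{2/3 + \epsilon/2}$ with $m^2 \mid f(z)$.

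The main step is a standard divisor-switching / large-sieve-type estimate: for a fixed integer $m$, the number of $z \le Z$ with $m^2 \mid f(z)$ is $\ll \rho(m^2) (Z/m^2 + 1)$, where $\rho(n) = \#\{x \bmod n : f(x) \equiv 0\}$ is the usual multiplicative root-counting function, which satisfies $\rho(n) \ll_\epsilon n^\epsilon$ since $f$ is separable (its discriminant is a fixed nonzero integer, so $\rho$ is well controlled away from finitely many primes, and at those primes $\rho(p^k)$ is bounded). Summing over $m \ge M_0$:
\begin{equation*}
	\#\{z \le Z : \exists\, m \ge M_0,\ m^2 \mid f(z)\} \ll_\epsilon \sum_{M_0 \le m \le (CZ^{\deg f})^{1/2}} m^\epsilon \left( \frac{Z}{m^2} + 1 \right).
\end{equation*}
The first term contributes $\ll_\epsilon Z \cdot M_0^{-1+\epsilon} = Z^{1/3 + O(\epsilon)}$, which is $o(Z)$. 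The second term, $\sum m^\epsilon$ over $m \le C' Z^{(\deg f)/2}$, naively is too large, so here one must be more careful: the tail $m > Z^{1/2+\delta}$ needs the divisor to come from a genuine square factor of a value of size $\ll Z^{\deg f}$, and one restricts via $m \mid f(z)$ to get $\ll_\epsilon \sum_{m} \rho(m^2)(Z/m^2 + 1)$ only up to $m \le \sqrt{\max_{z\le Z}|f(z)|}$, but in fact for $m^2 \mid f(z)$ and $z \le Z$ we may simply note there is no contribution unless $m^2 \le |f(z)| \ll Z^{\deg f}$ AND, crucially, that for $m > Z$ there are $O(1)$ values of $z \le Z$ with $m^2 \mid f(z)$ for each residue class mod $m^2$, giving total $\ll \rho(m^2)$; summing $\rho(m^2) \ll_\epsilon m^\epsilon$ over the short-ish range still looks bad, so the clean fix is to invoke a known result (e.g. the square-free sieve for polynomial values, Hooley / Nair, or the fact that $\#\{z \le Z : f(z) \text{ not squarefree beyond a bounded factor}\}$ is controlled) — but since the paper only needs the exponent $2/3 - \epsilon$ rather than the optimal one, the elementary bound above with the split at $m = Z^{1/2}$ suffices: for $m \le Z^{1/2}$ use $Z/m^2$, getting $\ll Z M_0^{-1+\epsilon}$; for $Z^{1/2} < m \le C'Z^{(\deg f)/2}$ use that each such $m$ contributes $O(\rho(m^2))$ values of $z$, but actually reparametrize by $z$: each $z$ has at most $O_\epsilon(Z^\epsilon)$ such divisors $m$, so this range contributes $\ll_\epsilon Z^\epsilon \cdot \#\{z : m(z) > Z^{1/2}\}$, and $m(z) > Z^{1/2}$ means $f(z)$ is divisible by a square exceeding $Z$, which happens for $\ll_\epsilon \sum_{m > Z^{1/2}} \rho(m^2)(Z/m^2 + 1)$ — circular, so one genuinely appeals to the standard result that $f(z)$ is squarefree (up to a fixed bounded factor coming from $\mathrm{disc}(f)$) for a density-one set of $z$, equivalently $m(z) \ll_\epsilon Z^\epsilon$ for almost all $z \le Z$; this is classical for $\deg f = 2$ and follows from the square-free sieve in general.

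The main obstacle is thus controlling the contribution of \emph{large} square divisors $m > Z^{1/2}$: the naive divisor bound is lossy there, and one must either cite the square-free sieve for values of separable polynomials (Estermann/Hooley for degree $2$, Granville/Nair for higher degree under ABC, or unconditionally via the Greaves/Nunes-type results) or argue directly that a square $m^2 \mid f(z)$ with $m$ large is rare. Once that input is granted, the lemma follows immediately: for almost all $z$ one has $m(z) \le z^\epsilon$, hence $\sqf(f(z)) = |f(z)|/m(z)^2 \gg z^{\deg f - 2\epsilon} \ge z^{2 - 2\epsilon} \ge z^{2/3 - \epsilon}$, and after relabelling $\epsilon$ we are done. (In fact this argument gives the stronger $\sqf(f(z)) \gg z^{\deg f - \epsilon}$; the weaker exponent $2/3$ is stated presumably because it is all that is needed downstream, or to keep the cited squarefree-sieve input unconditional — in the degree-$2$ case $d(z) = z^2+3$ relevant here everything is unconditional and classical, cf. Remark \ref{rmk-Ricci}.)
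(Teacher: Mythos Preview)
Your argument has a genuine gap. The elementary divisor-counting you begin with does not close, as you yourself recognise: for the tail range $m > Z^{1/2}$ the bound $\sum_m \rho(m^2)$ is of size $Z^{1+\epsilon}$ and the reparametrisation by $z$ is circular. Your fallback is to assert that for almost all $z$ one has $m(z) \le z^{\epsilon}$, i.e.\ that $f(z)$ is squarefree up to a bounded factor for almost all $z$. This is precisely the square-free values problem for $f$, which is classical for $\deg f = 2$ (Estermann, Ricci) and known for $\deg f = 3$ (Hooley), but \emph{open unconditionally} for $\deg f \ge 4$; Granville's result requires ABC. The lemma is stated for separable $f$ of arbitrary degree $\ge 2$, and it is used in Lemma~\ref{lemma-sf} for $h_1$, which can have any even degree $\ge 2$. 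So the appeal to the square-free sieve does not prove the lemma as stated.

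The paper's proof takes a completely different route: it quotes a bound of Luca--Shparlinski,
\[
Q_f(S,Z) := \#\{(z,r,s)\in\ZZ^3: 1\le z\le Z,\ 1\le s\le S,\ f(z)=sr^2\} \ll_f Z^{1/2+\epsilon} S^{3/4},
\]
valid for separable $f$ of any degree. Taking $S = Z^{2/3-2\epsilon}$ gives $Q_f(S,Z) \ll Z^{1-\epsilon/2} = o(Z)$ directly. This explains the exponent $2/3$: it is exactly the threshold at which $\tfrac12 + \tfrac34\theta = 1$, not slack left in to keep a sieve unconditional. If you want an unconditional proof covering all degrees, you need an input of this Bombieri--Pila/determinant-method flavour rather than the square-free sieve.
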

\begin{proof}
    We consider the quantity 
    \begin{eqnarray*}
    	Q_f(S,Z) = \#\{ (z,r,s) \in \ZZ^3: 1 \leq z \leq Z, 1 \leq s \leq S, f(z) = sr^2 \}.
    \end{eqnarray*}
	Let $f(x) \in \ZZ[x]$ be a polynomial satisfying our assumptions. By a result of Luca--Shparlinski \cite[Theorem 1.3]{LS}, we have
	\begin{eqnarray*}
		Q_f(S,Z) \ll_f Z^{\frac{1}{2} + \epsilon} S^{\frac{3}{4}}.
	\end{eqnarray*}
	We may choose $S = Z^{\frac{2}{3} - 2\epsilon}$, so that
	$Q_f(S, Z) = o(Z)$. This implies that for almost all $z \leq Z$, we have $\sqf(f(z)) \geq Z^{\frac{2}{3} - \epsilon} \geq z^{\frac{2}{3}-\epsilon}$.
\end{proof}

\begin{lemma}\label{lemma-sf}
	Let $f \in \ZZ[x]$ be a non-constant polynomial, and let $g(x) = f(x)^2 + 3$. Then for almost all $z \in \ZZ_{>0}$, and so we have
	$$\log \sqf(g(z)) \gg \log z,$$
	where the implied constant depends on the choice of $f$.
\end{lemma}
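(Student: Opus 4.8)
The plan is to exploit the very special shape $g = f^2 + 3$: if $\sqf(g(z))$ is small, then $f(z)$ is forced to be the $Y$-coordinate of a solution of a Pell-type equation $Y^2 - sW^2 = -3$ with $s$ bounded, and such solutions are sparse, so the exceptional $z$ form a set of density $0$.

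Fix $\epsilon\in(0,1)$ and put $S = Z^{1/2}$. It is enough to bound the number of $z\in[1,Z]$ with $\sqf(g(z))\le S$, since for $z\le Z$ this then yields $\sqf(g(z)) > z^{1/2}$, i.e. $\log\sqf(g(z))\ge \tfrac12\log z$, for all but $o(Z)$ values of $z\le Z$. Write $n=\deg f\ge 1$ and $M = M(Z):=\max_{1\le z\le Z}|f(z)|\ll_f Z^{n}$. There are only $O_f(1)$ integers $z$ with $|f(z)|\le 1$; for every other $z$ we have $f(z)^2 < g(z) = f(z)^2+3 < (|f(z)|+1)^2$, so $g(z)$ lies strictly between two consecutive squares and is not a perfect square. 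Hence $s:=\sqf(g(z))$ is square-free and $\ne 1$, so it is a non-square with $2\le s\le S$, and writing $g(z) = s w^2$ with $w\ge 1$ and $y=f(z)$ gives
\[
 y^2 - s w^2 = -3, \qquad |y|\le M,\quad w\ge 1 .
\]
Thus every $z$ under consideration lies in $A_s := \{\, z\le Z : \exists\, w\ge 1,\ f(z)^2 - s w^2 = -3 \,\}$ for some non-square $s$ with $2\le s\le S$.

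The next step is to bound $\#A_s$ uniformly in $s$. By the classical theory of the Pell-type equation $Y^2 - sW^2 = -3$, its integer solutions fall into $O_\epsilon(s^{\epsilon})$ orbits under the group of automorphs (generated by the fundamental solution $\eps_s$ of $t^2-su^2=1$), and within each orbit the number of solutions with $|Y|\le M$ is $\ll \log(2M)/\log\eps_s + 1 \ll \log(2M)$, using only the trivial bound $\eps_s > 2$ coming from \eqref{lower-bound}. Since a positive solution is determined by its $Y$-coordinate, there are $\ll_{f,\epsilon} s^{\epsilon}\log Z$ admissible values of $y$, and for each of them at most $2n$ integers $z$ satisfy $f(z)=\pm y$; hence $\#A_s \ll_{f,\epsilon} s^{\epsilon}\log Z$. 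Summing over $s\le S$,
\[
 \#\{\, z\le Z : \sqf(g(z))\le S \,\}\ \ll_{f,\epsilon}\ \sum_{s\le S} s^{\epsilon}\log Z\ \ll_{\epsilon}\ S^{1+\epsilon}\log Z\ =\ Z^{(1+\epsilon)/2}\log Z\ =\ o(Z),
\]
which, together with the $O_f(1)$ excluded values, shows the exceptional set has density $0$, and proves the lemma with implied constant $\tfrac12$.

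The one genuine input is the uniformity in $s$ of the Pell count: the number of solution orbits of $Y^2 - sW^2 = -3$ must be $s^{o(1)}$ uniformly in $s$ (it is controlled by the number of ideals of norm $3$ in $\ZZ[\sqrt{s}]$ together with a conductor factor, hence $\ll_\epsilon s^{\epsilon}$, and in fact $O(1)$ for this fixed right-hand side), while the per-orbit count needs nothing beyond $\eps_s>2$. Everything else is routine bookkeeping, and the precise value of the final exponent is irrelevant, since only $\sqf(g(z))\ge z^{c}$ for some fixed $c>0$ is needed.
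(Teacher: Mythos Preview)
Your argument is correct and takes a genuinely different route from the paper. The paper factors $g(x)=c\,h_1(x)h_2(x)^2$ with $h_1$ separable, shows $\deg h_1\ge 2$ by observing that $f(z)^2+3$ cannot be identically $c$ times a square, and then invokes Lemma~\ref{lemma-LS} (i.e.\ the Luca--Shparlinski bound) to get $\sqf(g(z))\ge z^{2/3-\epsilon}$ for almost all $z$. You instead exploit the shape $g=f^2+3$ directly: a small square-free part $s$ forces $f(z)$ to appear as the $Y$-coordinate of a solution of $Y^2-sW^2=-3$, and the classical Pell machinery (finitely many orbits for fixed right-hand side, $\ll\log M/\log\eps_s$ solutions per orbit) yields $\ll_f \log Z$ exceptional $z$ per value of $s$, hence $\ll_f Z^{1/2+\epsilon}\log Z$ in total. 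Your approach is more elementary and self-contained, avoiding the appeal to \cite{LS} entirely; the paper's approach is more modular and would adapt to $f^2+k$ for any fixed $k$ without separate Pell bookkeeping, and yields a slightly larger exponent ($2/3$ versus your $1/2$), though as you note only some positive exponent is needed downstream. The one point worth stating cleanly in your write-up is the uniform (in $s$) bound on the number of orbits of $Y^2-sW^2=-3$; since all solutions are automatically primitive (any common factor would have square dividing $3$), the classical bound $O(\tau(3))=O(1)$ applies directly, so your parenthetical ``in fact $O(1)$'' is the right statement and the $s^\epsilon$ is unnecessary.
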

\begin{proof}
	We may decompose 
	$$g(x) = c \cdot h_1(x) \cdot h_2(x)^2,$$
	where $c \in \ZZ$, $h_1, h_2 \in \ZZ[x]$, and $h_1$ separable.
	
	We shall firstly show that $\deg(h_1) > 0$.
   If not, then we may consider the formula
   $$f(z)^2 + 3 = g(z) = c \cdot h_2(z)^2$$
   for integer variable $z$.
   By comparing the leading coefficients of both sides, we know $c$ is a perfect square, hence without loss of generalization, we may assume that $c = 1$. However, the only integer solution to the equation $z_1^2+3 = z_2^2$ is that $z_1 = \pm 1$ and $z_2 = \pm 2$, contradicting the assumption that $f$ is a non-constant polynomial.
   
   Let $g_1 = c \cdot h_1(z)$. Since $2 \, | \, \deg(g)$ by its definition, we know that  $\deg h_1 \geq 2$. 
   By Lemma \ref{lemma-LS}, we know $\sqf(g_1(z)) \gg z^{\frac{2}{3} - \epsilon}$ for almost all $z \in \ZZ_{> 0}$. Note that $\sqf(g(z)) = \sqf(g_1(z))$, we have
   \begin{equation*}
   	\log \sqf (g(z)) = \log \sqf (g_1(z)) \gg \log z,
   \end{equation*}  
   for almost all $z \in \ZZ_{>0}$.
\end{proof}

Now we are ready to give the proof of Proposition \ref{A1-sol}.

\begin{proof}[Proof of Proposition \ref{A1-sol}]
	If the integer solution $(t, u, z_0)$ lie on an $\AAA^1$-curve defined over $\ZZ$, then there exists non-constant polynomials $t, u, d, f \in \ZZ[x]$, such that 
	$$f(0) = z_0, \quad d(x) = f(x)^2 + 3 \quad \text{ and } \quad t(x)^2 - d(x)u(x)^2 = 1.$$ 
Hence we have
 $$\lim_{x \rightarrow \infty} \frac{\log t(x)}{\log d(x)} = \lim_{x \rightarrow \infty}\frac{\deg(t) \log x}{\deg(d) \log x} = \frac{\deg(t)}{\deg(d)} < \infty.$$ 
 
If $u \neq 0$, then by the structure of the integer solutions to the Pell equation, we must have $2|t(z)| \geq 2 t_1(z) = \eps_{d(z)} + \eps_{d(z)}^{-1} > \eps_{d(z)}$ for $z \in \ZZ_{> 0}$. Hence by Corollary \ref{cor-yam-for}, Lemma \ref{lemma-LS} and Lemma \ref{lemma-sf}, we have
\begin{align*}
	\limsup_{\substack{z \rightarrow \infty \\ 3 \, \nmid \, f(z)}} \Big| \frac{\log t(z) }{\log d(z)} \Big|  \gg &  \ \limsup_{\substack{z \rightarrow \infty \\ 3 \, \nmid \, f(z)}} \frac{\log  \eps_{d(z)}}{\log d(z)}  \\
\gg & \ \limsup_{\substack{z \rightarrow \infty \\ 3 \, \nmid \, f(z)}} \frac{\big(\log \sqf(d(z))\big)^{2}}{\deg(d) \log z} \\
\gg & \ \limsup_{\substack{z \rightarrow \infty \\ 3 \, \nmid \, f(z)}} \frac{(\log z)^2}{\log z} \, \rightarrow \, \infty.
\end{align*}

Therefore, we require that
	$3 | f(z) \text{ for all sufficiently large } z,$ in particular $3 | f(3z)$ for all sufficiently large $z$, which implies $3 | f(0)$. This contradicts the assumption $3 \nmid z_0$.
\end{proof}

\subsection{Computation of the exponent}\label{PicU}
We are going to compute the exponent $A = \rho_U + b$ in the Conjecture \ref{BW-h} for the surface $U$ defined by (\ref{logK3-2}). 

Let $X = V(2uyz - y^2v+3u^2v+v^3) \subset \PP_\QQ^3$ be the completion of $U$, which is a singular cubic surface.
Let $L_1 = V(y, v), L_2 = V(u, v), L_3 = V(z, v)$. The divisor at infinity is $D = X \setminus U = V(2uyz)$, a union of these three lines. It follows that $b = 2$.

There is a unique singularity $[u:y:z:v] = [0:0:1:0]$ on $X$. It is of the type $\mathbf{A}_2$ according to the classification by Bruce and Wall \cite[Lemma 3]{BW}.
Let $\tilde{X} \subset \PP_{}^3 \times \PP_{[u_1:y_1:v_1]}^2$ be the blow up of $X$ at this singularity. 
%We blow up at origin of the variety $X_0 = V(2uy - y^2v + 3u^2v + v^3) \subset \AAA_{u,y,v}^3.$
%The blowup $X_1 \rightarrow X_0$ is covered by $3$ affine charts. Consider $B_1 = \AAA^3$ with coordinates $(u_1,y_1,v)$, and the morphism $\phi: B_1 \rightarrow \AAA^3$ defined by $u = u_1v, y = y_1v$. The inverse image of $X_0$ under $\phi$ is defined by
%$$2u_1y_1v^2 - y_1v^3 + 3u_1v^3 + v^3 = v^2 (2u_1y_1 - y_1v + 3u_1v + v).$$
%The exceptional divisors are defined by 
%$$v = 0, 2u_1y_1 = 0.$$
There are two exceptional curves
$$E_1 = ([0:0:1:0],[u_1:0:v_1]), \text{ and } E_2 = ([0:0:1:0],[0:y_1:v_1]).$$
The surface $\tilde{X}$ is the minimal disingularization of $X$, hence it follows that 
$$\mathrm{rank} (\Pic_{\overline{\QQ}} (\tilde{X}) ) = 7.$$

Let $L_4 = V(v+y, 3u-2z)$ and $L_5 = V(v+y, u)$ be two other lines in $X$, and denote by the same names the strict transforms of $L_i, 1 \leq i \leq 5$, in $\tilde{X}$. One may check that the intersection pairing of $L_1, L_2, L_3, L_4, L_5, E_1, E_2$ in $\tilde{X}$ takes the form
$$
\begin{array}{c|ccccccc}
 & L_1 & L_2 & L_3 & L_4 & L_5 & E_1 & E_2 \\
\hline
L_1 & -1 & 0 & 1 & 1 & 0 & 1 & 0 \\
L_2 & 0 & -1 & 1 & 0 & 0 & 0 & 1 \\
L_3 & 1 & 1 & -1 & 0 & 0 & 0 & 0 \\
L_4 & 1 & 0 & 0 & -1 & 1 & 0 & 0 \\
L_5 & 0 & 0 & 0 & 1 & -1 & 0 & 1 \\
E_1 & 1 & 0 & 0 & 0 & 0 & -2 & 1 \\
E_2 & 0 & 1 & 0 & 0 & 1 & 1 & -2 \\
\end{array}
$$
This is matrix has rank 7, hence it generates the geometric Picard group of $\tilde{X}$. Since all these lines are rational, it follow that $\mathrm{rank} (\Pic_{\QQ} (\tilde{X}) ) = 7.$ We also know from the intersection pairing that the five lines $L_1,L_2,L_3,E_1,E_2$ at infinity are linearly independent, hence $\rho_U = 7 - 5 = 2$. Moreover, the boundary divisor $E_1+E_2+L_1+L_2+L_3$ has anticanonical class in the Picard group, so $U$ is log K3.

%\begin{align*}
%L_1 \cap L_3 = \{ [1:0:0:0],[1:0:0] \}, \\
%L_2 \cap L_3 = \{ [0:1:0:0],[0:1:0] \}, \\
%E_1 \cap L_1 = \{ [0:0:1:0],[1:0:0] \}, \\
%E_2 \cap L_2 = \{ [0:0:1:0],[0:1:0] \}, \\
%E_1 \cap E_2 = \{ [0:0:1:0],[0:0:1] \}.
%\end{align*}
%All other pairs do not intersect. Hence these lines at infinity form a pentagon.

%Now we will write all lines in $X$.
%\begin{align*}
%    L_{4,5} = V(v\pm y, 3u \mp 2z), \quad & \quad
%    L_{6,7} = V(v \pm y, u), \\
%    L_{8,9} = V(y, 3u^2 + v^2), \quad &  \quad L_{10,11} = V(2uz-yv, 3u^2 + v^2),  \\
%    L_{12} = V(z + i\sqrt{3}v, y + i\sqrt{3}u + v), \quad L_{13} = V(z - i\sqrt{3}v, y - i\sqrt{3}u + v), \\
%    L_{14} = V(z + i\sqrt{3}v, y + i\sqrt{3}u - v), \quad L_{15} = V(z - i\sqrt{3}v, y - i\sqrt{3}u - v). \\
 %   L_{12,13} = V(z + i\sqrt{3}v, (y + i\sqrt{3}u)^2 - v^2), \quad &  \quad
  %  L_{14,15} = V(z - i\sqrt{3}v, (y - i\sqrt{3}u)^2 - v^2). 
%\end{align*}

%{\color{red} To be completed.}

\subsection{Conclusion of the proofs}

Recall that the changing of variable $y = t + u z$ to the equation (\ref{Pell-2}) gives us the surface
$$U = \{(y,u,z) \in \AAA^3: 2uyz = y^2 - 3u^2 - 1\}.$$
We may establish a lower bound for $N_U^\circ(B)$, as follows.

\begin{lemma}\label{height-func}
	We have
	\begin{equation*}
	N_U^\circ(B) \geq \# \{ z \leq B: 3 \nmid z, \mu^2(d(z)) = 1, \eps_{d(z)} \leq B \}.
\end{equation*}
\end{lemma}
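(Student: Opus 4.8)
The plan is to exhibit an explicit injection from the set on the right-hand side into $U(\ZZ)^\circ$ that respects the height bound. Fix $z \leq B$ with $3 \nmid z$, $\mu^2(d(z)) = 1$ and $\eps_{d(z)} \leq B$, where $d(z) = z^2+3$. Write the fundamental solution as $\eps_{d(z)} = t_1 + u_1\sqrt{d(z)}$ with $t_1, u_1 \geq 1$, and set $y_1 = t_1 + u_1 z$. Then $(y_1, u_1, z)$ is an integer point on $U$, since the change of variables $y = t + uz$ carries the solution $(t_1, u_1)$ of $(\ref{Pell-2})$ to a solution of $(\ref{logK3-2})$. This assignment $z \mapsto (y_1, u_1, z)$ is clearly injective in $z$ (the last coordinate recovers $z$).

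Next I would check that $(y_1, u_1, z) \in U(\ZZ)^\circ$, i.e.\ that it does not lie on any $\AAA^1$-curve defined over $\ZZ$. This is exactly where Proposition \ref{A1-sol} enters: since $3 \nmid z$, $d(z)$ is square-free, and $u_1 \geq 1 \neq 0$, the corresponding solution $(t_1, u_1, z)$ of $(\ref{Pell-2})$ does not lie on an $\AAA^1$-curve over $\ZZ$; and because the change of variables $y = t + uz$ is an isomorphism of affine varieties over $\ZZ$ between the surface $(\ref{Pell-2})$ and $U$, it carries $\AAA^1$-curves to $\AAA^1$-curves, so $(y_1, u_1, z)$ is likewise not on any such curve. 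Hence $(y_1, u_1, z)$ is counted by $N_U^\circ$.

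Finally I would verify the height bound $\max(|y_1|, |u_1|, |z|) \leq B$. We already have $z \leq B$. From $\eps_{d(z)} = t_1 + u_1\sqrt{d(z)} \leq B$ and $t_1, u_1 \geq 1$ we get $u_1 \leq u_1\sqrt{d(z)} < \eps_{d(z)} \leq B$ and $t_1 < \eps_{d(z)} \leq B$; moreover $y_1 = t_1 + u_1 z \leq t_1 + u_1\sqrt{d(z)} = \eps_{d(z)} \leq B$ since $z \leq \sqrt{z^2+3} = \sqrt{d(z)}$. Thus every $z$ in the right-hand set yields a distinct point of $U(\ZZ)^\circ$ of height at most $B$, which gives the claimed inequality. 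The only genuinely substantive input is Proposition \ref{A1-sol} (already available) together with the observation that the linear change of variables is an $\AAA^1$-curve-preserving isomorphism over $\ZZ$; the height estimate is a routine manipulation of the inequality $\eps_{d(z)} \leq B$.
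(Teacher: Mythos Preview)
Your proof is correct and follows the same overall strategy as the paper: for each admissible $z$ produce an explicit integer point on $U$ coming from the fundamental solution of $t^2-d(z)u^2=1$, invoke Proposition~\ref{A1-sol} (together with the fact that $(t,u,z)\mapsto(t+uz,u,z)$ is an automorphism of $\AAA^3_\ZZ$, hence carries $\AAA^1$-curves to $\AAA^1$-curves) to see that the point lies in $U(\ZZ)^\circ$, and check that its height is at most $B$.

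The one genuine difference is the choice of point. The paper takes the point with first coordinate $t_1-u_1z$ (coming from the solution $(t_1,-u_1)$), and then argues asymptotically that for large $z$ the coordinate $u_1$ dominates both $|t_1-u_1z|\ll z^{-2}\eps_{d(z)}$ and $z$, so the height equals $u_1<\eps_{d(z)}\leq B$. You instead take $y_1=t_1+u_1z$, which is larger but allows a one-line height bound: since $z<\sqrt{z^2+3}$ one has $y_1\leq t_1+u_1\sqrt{d(z)}=\eps_{d(z)}\leq B$ directly, with no asymptotic analysis needed. Both routes work; yours is slightly more economical for the purpose of this lemma, while the paper's choice extracts the finer information that the height is actually $u_1\asymp z^{-1}\eps_{d(z)}$.
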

\begin{proof}

Recall that all the solutions of (\ref{Pell-2}) are of the form
$$\{(t_n, u_n) \in \ZZ^2: t_n + \sqrt{d(z)}u_n = \pm \eps_{d(z)}^n, n \in \ZZ\}.$$
When $z \leq B, 3  \nmid  z, \mu^2(d(z)) = 1$, and $  \eps_{d(z)} \leq B$, we consider the integer point  $$P = (t_1 - u_1z, u_1, z) \in U.$$ We will show that the height of $P$ is less than $B$. 

We have 
$$u_1 = \frac{\eps_{d(z)} - \eps^{-1}_{d(z)}}{2 \sqrt{d(z)}} \gg z^{-1} \eps_{d(z)}.$$
By Corollary \ref{thm-Yam}, we know that $|u_1| > |z|$, for $3 \nmid z, \mu^2(d(z)) = 1$ when $z$ is sufficiently large. Note that
$$|t_1 - u_1 z| = \bigg| \, \frac{\eps_{d(z)} + \eps^{-1}_{d(z)}}{2} - \frac{\eps_{d(z)} - \eps^{-1}_{d(z)}}{2 \sqrt{d(z)}} \cdot z \, \bigg| \ll \Big(1 - \frac{z}{\sqrt{d(z)}} \Big) \eps_{d(z)} \ll z^{-2} \eps_{d(z)}.$$
This implies that $|u_1| > |t_1 - u_1 z|$ when $z$ is sufficiently large.
Therefore, if $\eps_{d(z)} \leq B$, then the height of $P$ is $u_1$, and we have 
$$u_1 = \frac{\eps_{d(z)} - \eps^{-1}_{d(z)}}{2 \sqrt{d(z)}} < \eps_{d(z)} \leq B.\vspace{-0.35em}$$ 
\end{proof}

We are now ready to complete the proofs of Theorem \ref{thm-lb}, Corollary \ref{rqf} and Theorem \ref{thm-cn}. From the last section, we know that the Browning--Wilsch Conjecture \ref{BW-h} predicts that 
\begin{eqnarray}\label{BW-h-U}
	N_U^\circ(B) \ll (\log B)^4.
\end{eqnarray}

\begin{proof}[Proof of Theorem \ref{thm-lb}]
	We consider the quantity
	\begin{eqnarray*}
		S(B) = \# \{ z \leq (\log B)^{4 + \epsilon}: 3  \nmid  z, \mu^2(d(z)) = 1, \log \eps_{d(z)} \leq \log B\}.
	\end{eqnarray*}
Then Lemma \ref{height-func} implies that 
\begin{align}\label{sb-lemma}
\begin{split}
		S(B) \leq & \ \# \{ z \leq B: 3  \nmid  z, \mu^2(d(z)) = 1, \log \eps_{d(z)} 
	 \leq \log B\}
	 \\  \leq & \ N_U^{\circ}(B) \ll (\log B)^4,
\end{split}
\end{align}
under the assumption of (\ref{BW-h-U}).
Therefore, $S(B)$ is a subset of $[1, (\log B)^{4 + \epsilon}] \cap \ZZ$ with zero density. As $B \rightarrow \infty$, then for almost all $z$ with $3 \nmid z$ and $z^2 + 3$ square-free, we have 
$$\log \eps_{d(z)} > \log B \geq  z^{\frac{1}{4 + \epsilon}} \gg d(z)^{\frac{1}{8} - \epsilon}. \vspace{-0.35em}$$
\end{proof}

\begin{proof}[Proof of Corollary \ref{rqf}]
     For the same reason as in Remark \ref{rmk-Ricci}, we know that there is a positive proportion of $z \in \ZZ_{> 0}$ with $z \equiv 26 \, (\mod 42)$, such that $d(z) = z^2 + 3$ is square-free. We will show that when $z$ satisfies the above assumption, the sequence of real quadratic fields $K_z = \QQ(\sqrt{d(z)})$ satisfies (\ref{rqf}), when $z \rightarrow \infty$. Note that $d(z) \equiv 3 \, (\mod 4)$, and so we have $\Delta_{K_z} = 4 d(z)$.

    By assumption, we know $d(z)$ is a multiple of $7$, so the negative Pell equation $$t^2 - 4d(z) \cdot u^2 = -4$$ is not solvable for $\mod \, 7$ reason. Let $(a,b)$ be the pair of smallest positive integers satisfying
    $a^2 - 4d(z) \cdot b^2 = 4.$ Note that $a$ must be even, so we have
    $(\frac{a}{2})^2 - d(z) \cdot b^2 = 1.$
    Hence we know that
    $$\eps_{K_z} = \frac{a}{2} + b \sqrt{d(z)} \geq \eps_{d(z)}.$$
    Then we may apply Theorem \ref{thm-lb} to conclude the proof by using the assumption $3 \nmid z$ and $d(z)$ square-free.
\end{proof}

\begin{proof}[Proof of Theorem \ref{thm-cn}]
Let $Z \leq B$ be a parameter, it follows from (\ref{sb-lemma}) that
	\begin{align*}
		& \ \# \{ z \leq Z: 3  \nmid  z, \mu^2(d(z)) = 1, \log \eps_{d(z)} \leq \log B \} \\
		 \leq & \ \# \{ z \leq B: 3  \nmid  z, \mu^2(d(z)) = 1, \log \eps_{d(z)} \leq \log B \}  \ll (\log B)^4,
	\end{align*}
under the assumption of (\ref{BW-h-U}).
	By the class number formula (\ref{cnf}), Theorem \ref{thm-Yam} and the bound $L_d(1) \ll \log d$, we have
	\begin{align*}
		\sum_{\substack{z \leq Z, 3 \, \nmid \, z \\ \mu^2(d(z)) = 1 \\ \log \eps_{d(z)} \leq \log B}} h(z^2 + 3)  = & \sum_{\substack{z \leq Z, 3 \, \nmid \, z \\ \mu^2(d(z)) = 1  \\ \log \eps_{d(z)} \leq \log B}} \frac{\sqrt{d(z)} L_d(1)}{\log \eps_{d(z)}} \\
		 \ll & \  (\log B)^4 \cdot \frac{Z \log Z}{(\log Z)^2} \\
		  = & \ (\log B)^4 \cdot Z \log^{-1} Z.
	\end{align*}
	On the other hand, we have
	\begin{align*}
		\sum_{\substack{z \leq Z, 3 \, \nmid \, z \\ \mu^2(d(z)) = 1  \\ \log \eps_{d(z)} > \log B}} h(z^2 + 3)  = & \sum_{\substack{z \leq Z, 3 \, \nmid \, z \\ \mu^2(d(z)) = 1  \\ \log \eps_{d(z)} > \log B}} \frac{\sqrt{d(z)} L_d(1)}{\log \eps_{d(z)}} \\
		 \ll & \  \sum_{z \leq Z} \frac{z \log z}{\log B} \\
		 \ll &  \  (\log B)^{-1} \cdot Z^2 \log Z.
	\end{align*}
	Finally, we choose $\log B = (Z \log^2 Z)^{\frac{1}{5}}$ to conclude the proof of Theorem \ref{thm-cn}.
\end{proof}

\bibliographystyle{amsplain}
\bibliography{Ref}

\end{document}